\newcommand{\bbC}{\mathbb{C}}
\newcommand{\bbN}{\mathbb{N}}
\newcommand{\bbR}{\mathbb{R}}
\newcommand{\calL}{\mathcal{L}}
\newcommand{\calT}{\mathcal{T}}
\DeclareMathOperator{\id}{id} 
\DeclareMathOperator{\one}{\mathbbm{1}} 
\newcommand{\argument}{\mathord{\,\cdot\,}} 
\newcommand{\dx}{\;\mathrm{d}} 
\newcommand{\norm}[1]{\left\lVert #1 \right\rVert} 
\newcommand{\modulus}[1]{\left\lvert #1 \right\rvert} 
\DeclareMathOperator{\dom}{dom} 
\DeclareMathOperator{\trace}{Tr} 
\newcommand{\restrict}[1]{|_{#1}}
\newcommand{\spb}{s} 
\newcommand{\Res}{\mathcal{R}} 
\theoremstyle{definition}
\newtheorem{definition}{Definition}[section]
\newtheorem{remark}[definition]{Remark}
\newtheorem{remarks}[definition]{Remarks}
\newtheorem{setting}[definition]{Setting}
\theoremstyle{plain}
\newtheorem{proposition}[definition]{Proposition}
\newtheorem{lemma}[definition]{Lemma}
\newtheorem{theorem}[definition]{Theorem}
\newtheorem{corollary}[definition]{Corollary}
\numberwithin{equation}{section}
\begin{document}

\title[Individual anti-maximum principles]{A characterization of the individual maximum and anti-maximum principle}
\author{Sahiba Arora}
\address{Sahiba Arora, Technische Universität Dresden, Institut für Analysis, Fakultät für Mathematik , 01062 Dresden, Germany}
\email{sahiba.arora@mailbox.tu-dresden.de}
\author{Jochen Gl\"uck}
\address{Jochen Gl\"uck, Bergische Universität Wuppertal, Fakultät für Mathematik und Naturwissenschaften, 42119 Wuppertal, Germany}
\email{glueck@uni-wuppertal.de}
\subjclass[2010]{35B09; 47B65; 46B42}
\keywords{Maximum principle; individual anti-maximum principle; eventual positivity; eventually positive resolvents}

\date{\today}
\begin{abstract}
	Abstract approaches to maximum and anti-maximum principles for differential operators typically rely on the condition that all vectors in the domain of the operator are dominated by the leading eigenfunction of the operator.
	We study the necessity of this condition. 
	In particular, we show that under a number of natural assumptions, so-called individual versions of both the maximum and the anti-maximum principle simultaneously hold if and only if the aforementioned domination condition is satisfied.
	
	Consequently, we are able to show that a variety of concrete differential operators do not satisfy an anti-maximum principle. 
\end{abstract}

\maketitle

\section{Introduction} \label{section:introduction}

Maximum and anti-maximum principles have been long used to obtain information about PDEs without explicitly knowing their solutions. 
While (anti-)maximum principles have been proved/disproved, for various concrete differential operators -- using, for  instance, kernel estimates -- only a few endeavours have been made to integrate these arguments in an abstract setting. 
One attempt in this direction was made by Takáč in \cite{Takac1996}. Inspired by his techniques, the present authors recently proved necessary and sufficient conditions for the so-called \emph{uniform} (anti-)maximum principles in \cite{AroraGlueck2021b}. On the other hand, a characterization for the \emph{individual} maximum and anti-maximum principle was given in \cite[Theorem~4.4]{DanersGlueckKennedy2016b}. 

More precisely, let $\Omega$ be a bounded domain in $\bbR^d$ and let $A: E\supseteq \dom{A} \to E$ be a differential operator on a function space $E$ over $\Omega$. Consider the equation
\[
	(\lambda-A)u=f
\]
for real numbers $\lambda$ in the resolvent set of $A$. If $\lambda_0 \in \bbR$ is an isolated spectral value of $A$, then we say that $A$ satisfies the \emph{individual} maximum principle if the inequality $f\geq 0$ implies $u\geq 0$ for all $\lambda$ in a $f$-dependent right neighbourhood of $\lambda_0$. If the right neighbourhood can be chosen independently of $f$, then we call the maximum principle \emph{uniform}. Analogously, if $f\geq 0$ implies $u\leq 0$ in a left neighbourhood of $\lambda_0$, then we speak of (individual or uniform) anti-maximum principles.

\subsection*{Eventual positivity and a domination assumption}

Recall that $A$ is said to be \emph{resolvent positive} if the resolvent $\Res(\lambda,A)$ exists and is positive for all sufficiently large real numbers $\lambda$. Resolvent positivity was studied in detail in \cite{Arendt1987} and is also related to the theory of \emph{positive $C_0$-semigroups} (see \cite[Chapters~B-II and C-II]{Nagel1986} and \cite[Corollary~11.4]{BatkaiKramarRhandi2017}). This notion also appears in \cite{Takac1996}.

As a generalization of the above notion, \emph{eventual positivity} and \emph{eventual negativity} of the resolvent was studied, as a complement to \emph{eventually positive $C_0$-semigroups}, in \cite{DanersGlueckKennedy2016a, DanersGlueckKennedy2016b, DanersGlueck2017, DanersGlueck2018a, DanersGlueck2018b}. The notions of individual and uniform eventual positivity of the resolvent, actually coincide with the individual and uniform maximum principles discussed above. The same is true for the anti-maximum principle and the eventual negativity of the resolvent.

In the present article, we deal with the individual (anti-)maximum principles and, in particular, with a domination condition that occurred in the characterization theorem \cite[Theorem~4.4]{DanersGlueckKennedy2016b}. To recall this condition, let $E$ be a complex Banach lattice whose real part and positive cone will be denoted by $E_{\bbR}$ and $E_+$ respectively. For $0\lneq u\in E$, the set
\[
	E_u:=\{f\in E: \text{ there exists } c>0 \text{ such that } \modulus{f} \leq cu \}
\]
is called the \emph{principal ideal} generated by $u$. Equipped with the \emph{gauge norm}
\[
	\norm{f}_u := \inf \{c>0 : \modulus {f} \leq cu\}\qquad \text{ for } f \in E_u,
\]
it is also a complex Banach lattice. The canonical embedding $E_u\to E$ is continuous and a lattice homomorphism. If $E_u$ is dense in $E$, then we say that $u$ is a \emph{quasi-interior point} of $E$ (or more precisely of $E_+$). As an example, let $(\Omega,\mu)$ be a finite measure space and $p\in [1,\infty]$. Then $\one$ -- which stands for the constant function taking the value $1$ -- is a quasi-interior point of the Banach lattice $L^p(\Omega,\mu)$ and the corresponding principal ideal is given by $L^p(\Omega,\mu)_{\one}=L^\infty(\Omega,\mu)$ (here the gauge norm is simply the $\norm{\argument}_{\infty}$-norm). We refer to the standard monographs \cite{Schaefer1974} and \cite{Meyer-Nieberg1991} for the general theory of Banach lattices.

For a linear operator $A: E \supseteq \dom{A} \to E$, the condition
\[
	\dom{A}\subseteq E_u
\]
is referred to as the domination condition in \cite{DanersGlueckKennedy2016b, DanersGlueck2017} and plays a significant role in the characterization of the individual (anti-)maximum principle in \cite[Theorem~4.4]{DanersGlueckKennedy2016b}.  In \cite[Theorem~4.1]{DanersGlueck2017}, the condition was analysed in detail and it was shown that the necessary conditions in \cite[Theorem~4.4]{DanersGlueckKennedy2016b} remain true without it. In this paper, we show that under a spectral assumption and a much weaker domination assumption (see Setting~\ref{sett:main} below), the individual maximum and anti-maximum principles are concomitantly satisfied if and only if $\dom{A}\subseteq E_u$. This helps us in disproving anti-maximum principles for various differential operators for which the maximum principle is already known.

\subsection*{Main result}

We confine ourselves to the study of real linear operators. This is not too restrictive, for instance, differential operators with real coefficients are typically real. Concretely, we call a linear operator $A: E \supseteq \dom{A} \to E$ \emph{real} if $\dom{A} = \dom{A}\cap E_{\bbR} + i \dom{A}\cap E_{\bbR}$ and $A\big(\dom{A}\cap E_{\bbR}\big)\subseteq E_{\bbR}$. Observe that if $A$ is real, then so are the operators $\Res(\lambda,A)$ for all real numbers $\lambda$ in the resolvent set $\rho(A)$.
For two vectors $u,v\in E_{\bbR}$, we write $u \succeq v$ (equivalently, $v\preceq u$) if there exists a real number $c > 0$ such that $u\geq cv$. 
We use the same notation not only for vectors but also for operators.

Most of our results are formulated in the following setting:

\begin{setting}
	\label{sett:main}
	Let $A: E \supseteq \dom{A} \to E$ be a closed, densely defined, and real linear operator 
	on a complex Banach lattice $E$ and let $\lambda_0 \in \bbR$ be an isolated spectral value
	of $A$
	and a pole of the resolvent $\Res(\argument,A)$. 
	Moreover, fix a quasi-interior point $u \in E$.
	\begin{itemize}
		\item 
		We say that the \emph{spectral assumption} is satisfied if and only if the following holds:
		the eigenvalue $\lambda_0$ of $A$ is geometrically simple and the corresponding eigenspace
		$\ker(\lambda_0-A)$ is spanned by a vector $v$ which satisfies $v\succeq u$;
		moreover, the dual eigenspace $\ker(\lambda_0 - A')$ contains a strictly positive functional $\varphi$.
		
		\item 
		We say that the \emph{domination assumption} is satisfied if and only if 
		there exists an integer $n \ge 0$ such that $\dom{A^n} \subseteq E_u$.
	\end{itemize}
\end{setting}

Observe that if the domination assumption above holds, then the condition ``$u$  is a quasi-interior point of $E$'' is redundant because $A$ (and hence $A^n$) is a densely defined operator.

The following is our main result.

\begin{theorem}
	\label{thm:main}
	In Setting~\ref{sett:main}, let the domination assumption be satisfied. 
	Then the following assertions are equivalent.
	\begin{enumerate}[\upshape (i)]
		\item 
		\emph{Strong individual (anti-)maximum principle:}
		For every $0 \lneq f \in E$, we have 
		\begin{align*}
			\Res(\mu,A)f \preceq -u
			\qquad \text{and} \qquad 
			\Res(\lambda,A)f \succeq u
		\end{align*}
		for all $\mu$ in a $f$-dependent left neighbourhood of $\lambda_0$ and
		for all $\lambda$ in a $f$-dependent right neighbourhood of $\lambda_0$.
		
		\item 
		\emph{Individual (anti-)maximum principle plus spectral assumption:}
		The spectral assumption is satisfied and 
		for every $0 \leq f \in E$, we have 
		\begin{align*}
			\Res(\mu,A)f \le 0
			\qquad \text{and} \qquad 
			\Res(\lambda,A)f \ge 0
		\end{align*}
		for all $\mu$ in a $f$-dependent left neighbourhood of $\lambda_0$ and
		for all $\lambda$ in a $f$-dependent right neighbourhood of $\lambda_0$.
		
		\item 
		\emph{Improved domination assumption plus spectral assumption:}
		The spectral assumption is satisfied and $\dom{A} \subseteq E_u$.
	\end{enumerate}
\end{theorem}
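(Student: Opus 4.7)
My plan is to establish the cyclic chain $(iii) \Rightarrow (i) \Rightarrow (ii) \Rightarrow (iii)$, with the last implication being the main new content. A preliminary observation valid under just the spectral assumption is that the pole of $\Res(\argument,A)$ at $\lambda_0$ is automatically simple: pairing the Laurent expansion $\Res(\lambda,A) = \sum_{k \ge -m}(\lambda-\lambda_0)^k B_k$ with the dual eigenfunctional yields $\varphi \circ \Res(\lambda,A) = (\lambda-\lambda_0)^{-1}\varphi$, so $\varphi \circ B_{-k} = 0$ for every $k \ge 2$; since $B_{-m}$ takes values in $\ker(\lambda_0-A) = \linSpan(v)$ and $\varphi(v) > 0$, this forces $m = 1$. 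The spectral projection is then the rank-one operator $Pf = (\varphi(f)/\varphi(v))v$, and $\Res(\lambda,A) = (\lambda-\lambda_0)^{-1}P + S + O(\lambda-\lambda_0)$ where the reduced resolvent $S$ satisfies $PS = SP = 0$ and $\operatorname{ran}(S) \subseteq \dom A$.

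For $(iii) \Rightarrow (i)$, the assumption $\dom A \subseteq E_u$ together with the closed graph theorem gives a bounded embedding $\dom A \hookrightarrow E_u$; since $\operatorname{ran}(S) \subseteq \dom A$, the reduced resolvent maps $E$ boundedly into $E_u$. For $0 \lneq f$, the principal term $(\lambda - \lambda_0)^{-1} Pf$ is a positive scalar multiple of $v$ which, via $v \succeq u$, dominates a diverging multiple of $u$ as $\lambda \to \lambda_0$; this easily overwhelms the bounded remainder $Sf + O(\lambda-\lambda_0)$ measured in $E_u$-norm and yields the strong $\pm u$-bounds on both sides of $\lambda_0$. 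The direction $(i) \Rightarrow (ii)$ is largely automatic: the pointwise $\ge 0$/$\le 0$ bounds of (ii) are weaker than those of (i); the spectral data is read off by testing the strong bounds against the Laurent expansion --- the sign compatibility between $\Res(\lambda,A)f \succeq u$ and $\Res(\mu,A)f \preceq -u$ forces the order of the pole to be odd and its leading coefficient to be rank-one with image in a one-dimensional subspace $\linSpan(v)$ with $v \succeq u$, while the existence of a strictly positive dual eigenfunctional follows from a duality argument on the spectral projection.

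The crux is $(ii) \Rightarrow (iii)$. The decomposition $\dom A = \linSpan(v) \oplus \dom{A_0}$, where $A_0 := A|_{\ker P}$ has $\lambda_0$ in its resolvent set, reduces the claim to $\dom{A_0} \subseteq E_u$, since $v \in \dom{A^n} \subseteq E_u$. Using $PS=SP=0$, the $n$-th power of the resolvent factors as $\Res(\lambda,A)^n = (\lambda-\lambda_0)^{-n}P + \Res_0(\lambda)^n$ where $\Res_0(\lambda) := (I-P)\Res(\lambda,A)$, so the weak domination assumption gives $\Res_0(\lambda)^n f \in E_u$ for every $f \in E$; a closed-graph argument on the embedding $\dom{A^n} \hookrightarrow E_u$, together with boundedness of $A^k \Res_0(\lambda)^n$ for $k \le n$ as $\lambda \to \lambda_0$, bounds $\|\Res_0(\lambda)^n f\|_u$ uniformly in $\lambda$ and yields $S^n f \in E_u$, that is, $\operatorname{ran}(S^n) \subseteq E_u$. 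To upgrade this to $\operatorname{ran}(S) \subseteq E_u$ --- equivalently to $\dom A \subseteq E_u$ --- I would use the individual (anti-)maximum principles to extract sign information from products $\Res(\lambda_1,A)\cdots\Res(\lambda_k,A) f$ with the $\lambda_i$ chosen on both sides of $\lambda_0$, expand these via the simple-pole Laurent form to isolate intermediate powers $S^k$, and bootstrap down from $S^n$ to $S$.

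The main obstacle will be precisely this final bootstrap from $\operatorname{ran}(S^n) \subseteq E_u$ to $\operatorname{ran}(S) \subseteq E_u$. The naive approach of inverting $S$ on $\ker P$ founders because doing so requires applying the unbounded operator $A_0$, which does not in general map $E_u$ into itself; and since $E_u$ is typically not closed in $E$, Baire-category arguments on the closed sets $\{f \in E : |Sf| \le ku\}$ are inconclusive. The positivity coming from the individual (anti-)maximum principles, together with the quantitative domination $v \succeq u$ and the strict positivity of $\varphi$, must therefore be exploited in an essential way --- for example, by representing $Sf$ as a compensated signed combination of resolvent values where the divergent principal parts cancel through the identity $\varphi \circ (I - P) = 0$ --- in order to force $Sf \in E_u$ for every $f \in E$.
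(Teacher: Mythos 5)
Your sketches of (iii) $\Rightarrow$ (i) and (i) $\Rightarrow$ (ii) follow the standard Laurent-expansion analysis (these two implications are, in the paper, simply cited from \cite{DanersGlueckKennedy2016b} and \cite{DanersGlueck2017}), and your observation that the spectral assumption forces the pole at $\lambda_0$ to be simple is correct. The genuine gap is in (ii) $\Rightarrow$ (iii), the only new implication, and you concede it yourself: the ``bootstrap from $\operatorname{ran}(S^n)\subseteq E_u$ down to $\operatorname{ran}(S)\subseteq E_u$'' is precisely the step you cannot carry out, and no amount of Laurent-coefficient bookkeeping will close it. Isolating $S$ from $S^n$ requires applying the unbounded part of $A$, which destroys membership in $E_u$; and the ``compensated signed combinations of resolvent values'' you gesture at never isolate a single power of $S$ without reintroducing unbounded factors. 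What is missing is an argument that feeds the \emph{individual} hypotheses of (ii) back into themselves one resolvent factor at a time, rather than an algebraic manipulation of the spectral decomposition.

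The paper's route has two ingredients you do not have. First (Theorem~\ref{thm:half-estimates}), a recursive construction: having found $\mu_1,\dots,\mu_k>\lambda_0$ with $g_k:=\Res(\mu_k,A)\cdots\Res(\mu_1,A)f\succeq -u$, one uses $v\succeq u$ to pick $c>0$ with $g_k+cv\ge 0$, applies the individual hypothesis to this \emph{positive} vector to obtain $\mu_{k+1}$, and absorbs the correction term via $\Res(\mu_{k+1},A)v=(\mu_{k+1}-\lambda_0)^{-1}v\preceq u$ (which uses $v\in\dom(A^n)\subseteq E_u$). After $n$ steps the finite resolvent expansion of Lemma~\ref{lem:resolvent-expansion-at-multiple-points}, together with the closed-graph fact that $\prod_{j=1}^n\Res(\mu_j,A)$ maps $E$ boundedly into $E_u$, transfers the estimate to $\Res(\lambda,A)f$ at \emph{every} $\lambda$ on the relevant side of $\lambda_0$; combined with the maximum principle in (ii) this gives $0\le\Res(\lambda,A)f\preceq u$, i.e.\ $\Res(\lambda,A)f\in E_u$, for each $f$ and some $f$-dependent $\lambda$. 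Second, the passage from this individual statement to $\dom A\subseteq E_u$ is exactly the Baire-category argument you dismissed --- but run on \emph{operator ranges} rather than closed subspaces: $E_u$ carries a complete norm making it continuously embedded in $E$, so each preimage $\Res(\lambda,A)^{-1}(E_u)$ is an operator range, and $E$ cannot be a countable union of proper operator ranges (Proposition~\ref{prop:necessary-operator-range} and Corollary~\ref{cor:eventually-in-subspace-countable}(a)). Your objection that $E_u$ is not closed in $E$ is the reason this refinement is needed, not a reason the method fails.
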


Let us briefly discuss the particular case where $E=C(K)$, the space of continuous functions on a compact Hausdorff set $K$ and $u\in E$ is the constant one function. Since $E_u=E$, the domination condition is always satisfied in Setting~\ref{sett:main} for $n=1$. Thus, according to Theorem~\ref{thm:main}, both strong individual maximum and anti-maximum principles hold at $\lambda_0$ if and only if the spectral assumption is satisfied. 

Next, let $E=L^p(\Omega,\mu)$, where $(\Omega,\mu)$ is a finite measure space and $p\in[1,\infty)$. If $u$ is the constant one function, then as mentioned before, $E_u=L^\infty(\Omega,\mu)$. For numerous differential operators $A$, one can show using Sobolev embedding theorems that the domination condition holds for sufficiently large $n\in\bbN$ but not for $n=1$. As a result, if one knows a priori that the maximum principle holds at $\lambda_0$ for those operators that in addition satisfy the spectral assumption, then the anti-maximum principle cannot hold at $\lambda_0$. Analogously, if the anti-maximum principle holds at $\lambda_0$, then the maximum principle cannot. We will illustrate this in Section~\ref{section:applications}.

It is natural to ask whether the assertions in Theorem~\ref{thm:main} are equivalent without the domination assumption in Setting~\ref{sett:main}. However, one can immediately check that this is not the case. Indeed, on $L^2[0,1]$  consider the bounded operator defined by $Af= (\one \otimes \one)f-f$ for all $f\in L^2[0,1]$, and let $\lambda_0 = 0$. The resolvent can be computed directly and is given by
\[
	\Res(\lambda,A)f =\frac{1}{\lambda(\lambda+1)} \big( (\one \otimes \one)f+\lambda f \big),\qquad (\lambda\in \rho(A)).
\]
Thus, one has $\Res(\lambda,A) \succeq \one\otimes \one$ and $\Res(\mu,A)\preceq -\one\otimes \one$ for all $\lambda>0$ and for all $\mu\in (-1,0)$. However, $\dom(A^n) = L^2[0,1]\not\subseteq L^\infty[0,1]=L^2[0,1]_{\one}$ for any $n\in\bbN_0$.

\subsection*{Related literature}

Individual maximum and anti-maximum principles for concrete differential operators have been intensively studied for several decades.  For instance, we refer the reader to the articles \cite{ProtterWeinberger1999, Pinchover1999, GrunauSweers2001, ClementPeletier1979, Birindelli1995} and to the references in Section~\ref{section:applications} for various examples. Abstract approaches to (anti-)maximum principles with several applications have been taken \cite{DanersGlueckKennedy2016a, DanersGlueckKennedy2016b, Takac1996, AroraGlueck2021b}. Recently, the first author proved \emph{local} versions of the (anti-)maximum principles in \cite{Arora2022}, see also \cite[Chapter~9]{Arora2023}, and gave a number of applications.

\subsection*{Organization of the article}

In Section~\ref{section:operator-ranges}, we recall the notion of operator ranges, a concept which is instrumental in proving Theorem~\ref{thm:main} in Section~\ref{section:proof}. In Section~\ref{section:sufficient-condition-domination}, we discuss a characterization of the domination assumption in Setting~\ref{sett:main} in the important special case where $A$ generates a $C_0$-semigroup. Applications of our result to various concrete differential operators are presented in Section~\ref{section:applications}.

\section{Eventual conditions on operator ranges}
	\label{section:operator-ranges}

The improved domination condition $\dom A \subseteq E_u$ from Theorem~\ref{thm:main} can equivalently be written in the form $\Res(\lambda,A)E \subseteq E_u$ for one number $\lambda$ in the resolvent set of $A$ or, equivalently, for all $\lambda$ in the resolvent set.
This raises the following question:~what happens if we only know that $\Res(\lambda_f,A)f \in E_u$ for all $f \in E$ and certain $f$-dependent numbers $\lambda_f$? 
The purpose of this section is to analyse this situation; see Corollary~\ref{cor:eventually-in-subspace-countable}(a) below.
In fact, this will be a consequence of the following more general result:

\begin{theorem}
	\label{thm:countable-individual-range-condition}
	Let $E$, $F$, and $V$ be Banach spaces such that $V$ is continuously embedded in $F$.
	Let $\calT \subseteq \calL(E;F)$ be a countable set of bounded linear operators and assume that for each $f \in E$ there exists $T \in \calT$ such that $Tf \in V$. 
	Then there even exists an operator $T_0 \in \calT$ such that $T_0 E \subseteq V$.
\end{theorem}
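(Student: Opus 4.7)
The plan is a Baire category argument of open-mapping type. For each $T \in \calT$, set $E_T := T^{-1}(V) = \{f \in E : Tf \in V\}$; by hypothesis $E = \bigcup_{T \in \calT} E_T$. I would equip each $E_T$ with the graph norm $\|f\|_T := \|f\|_E + \|Tf\|_V$. First I would verify that $(E_T, \|\cdot\|_T)$ is a Banach space: for a $\|\cdot\|_T$-Cauchy sequence $(f_n)$ one has $f_n \to f$ in $E$ and $Tf_n \to v$ in $V$, and the continuous embedding $V \hookrightarrow F$ together with continuity of $T \colon E \to F$ force $Tf = v \in V$. Hence $f \in E_T$, $f_n \to f$ in $\|\cdot\|_T$, and moreover the inclusion $(E_T, \|\cdot\|_T) \hookrightarrow E$ is continuous because $\|\cdot\|_E \le \|\cdot\|_T$.

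Next I would invoke Baire: $E$ is a complete metric space, hence not meager in itself, and the cover $\{E_T\}_{T \in \calT}$ is countable, so some $E_{T_0}$ is non-meager as a subset of $E$. Writing $E_{T_0} = \bigcup_{n \in \bbN} B_n$ with $B_n := \{f \in E_{T_0} : \|f\|_{T_0} \le n\}$, at least one $\overline{B_n}$ (closure taken in $E$) has non-empty interior. Since each $B_n$ is convex and symmetric, so is its closure, and the usual midpoint argument produces $r > 0$ with $\{f \in E : \|f\|_E < r\} \subseteq \overline{B_n}$.

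Finally, I would run the successive-approximations step from the proof of the open mapping theorem to upgrade this closure statement. Given $f \in E$ with $\|f\|_E < r$, choose inductively $g_k \in E_{T_0}$ with $\|g_k\|_{T_0} \le n \cdot 2^{1-k}$ and $\|f - \sum_{j=1}^{k} g_j\|_E < r \cdot 2^{-k}$; completeness of $E_{T_0}$ in $\|\cdot\|_T$ yields $g := \sum_{k \ge 1} g_k \in E_{T_0}$, and continuity of the inclusion $E_{T_0} \hookrightarrow E$ forces $g = f$, so $f \in E_{T_0}$. Scaling then gives $E_{T_0} = E$, which is precisely $T_0 E \subseteq V$. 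The only non-routine point is the reformulation in the first paragraph: although the hypothesis is phrased in terms of the preimages $T^{-1}(V)$, these carry a natural Banach space structure (the graph norm) coming from $V$ rather than from $E$, and once this is recognized the result becomes an instance of the familiar dichotomy that a continuous linear map between Banach spaces has image either equal to the target or meager in it.
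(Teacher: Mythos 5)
Your proof is correct and follows essentially the same route as the paper: the paper equips each preimage $T^{-1}(V)$ with exactly the graph norm you use (via the notion of operator ranges, Propositions~\ref{prop:characterization-operator-range} and~\ref{prop:preimage-operator-range}) and then applies Baire's theorem together with the open mapping theorem (Proposition~\ref{prop:necessary-operator-range}, citing Rudin) to conclude that a non-meagre preimage is all of $E$. The only difference is that you inline the successive-approximation step of the open mapping theorem rather than citing it.
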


We will show below that this result is a consequence of Baire's category theorem. But first, let us list a few consequences:

\begin{corollary}
	\label{cor:eventually-in-subspace-countable}
	Let $E$ and $V$ be two complex Banach spaces such that $V$ is continuously embedded in $E$, and let $A: E \supseteq \dom A \to E$ be a linear operator.
	\begin{enumerate}[\upshape (a)]
		\item
		Let $C \subseteq \bbC$ be a countable subset of the resolvent set of $A$ and assume that for each $f \in E$ there exists a number $\lambda \in C$ such that $\Res(\lambda,A)f \in V$.
		Then $\dom A \subseteq V$.
		
		\item 
		Assume that $A$ generates a $C_0$-semigroup $(e^{tA})_{t \in [0,\infty}$ on $E$ and assume that there exists a countable set $S \subseteq [0,\infty)$ such that for each $f \in E$ there exists a time $t \in S$ for which we have $e^{tA}f \in V$.
		Then there exists a time $t_0 \in [0,\infty)$ such that $e^{tA}E \subseteq V$ for each $t \ge t_0$.
	\end{enumerate}
\end{corollary}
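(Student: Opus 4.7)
The plan is to derive both statements as essentially immediate applications of Theorem~\ref{thm:countable-individual-range-condition}, with the Banach space $F$ there taken to be $E$ itself and the countable family $\calT$ chosen as the resolvents (respectively, semigroup operators) indexed by the countable set. The nontrivial content of the corollary therefore lives entirely in the theorem; the work remaining is to translate the conclusion ``some $T_0 \in \calT$ satisfies $T_0E \subseteq V$'' into the stated range/domain inclusions.

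For part~(a), I would set $\calT := \{\Res(\lambda,A) : \lambda \in C\} \subseteq \calL(E)$. Each element of $\calT$ is a bounded operator because $C$ lies in the resolvent set, and $\calT$ is countable because $C$ is. The hypothesis exactly says that for each $f \in E$ there is some $T \in \calT$ with $Tf \in V$, so Theorem~\ref{thm:countable-individual-range-condition} yields $\lambda_0 \in C$ with $\Res(\lambda_0,A)E \subseteq V$. Since $\lambda_0 \in \rho(A)$, the resolvent $\Res(\lambda_0,A)$ is a bijection from $E$ onto $\dom A$, so this inclusion reads $\dom A \subseteq V$, as required.

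For part~(b), I would analogously apply the theorem to $\calT := \{e^{tA} : t \in S\}$, obtaining $t_0 \in S$ with $e^{t_0 A}E \subseteq V$. To upgrade this single time to all times $t \ge t_0$, I would use the semigroup law in the form $e^{tA} = e^{t_0 A}\, e^{(t-t_0)A}$: for any $f \in E$ the vector $e^{(t-t_0)A}f$ still lies in $E$, and applying $e^{t_0 A}$ then lands in $V$. Thus $e^{tA}E \subseteq V$ for all $t \ge t_0$.

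There is no real obstacle here beyond correctly arranging the factorisation in part~(b); note that one must not try to write $e^{tA} = e^{(t-t_0)A} e^{t_0 A}$ and then argue on $V$, because we have no information about whether the semigroup leaves $V$ invariant. Keeping $e^{t_0 A}$ as the outer factor sidesteps this issue entirely and uses only the mapping property $e^{t_0 A}E \subseteq V$ supplied by the theorem.
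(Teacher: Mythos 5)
Your proposal is correct and follows exactly the paper's own argument: apply Theorem~\ref{thm:countable-individual-range-condition} to the countable family of resolvents (using $\Res(\lambda_0,A)E = \dom A$) for part~(a), and to the countable family of semigroup operators plus the semigroup law for part~(b). Your extra remark about keeping $e^{t_0A}$ as the outer factor is a sensible clarification of a detail the paper leaves implicit, but it does not change the route.
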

\begin{proof}
	(a) 
	From the assumption and Theorem~\ref{thm:countable-individual-range-condition} we conclude that, for some $\lambda \in C$, $\dom{A} = \Res(\lambda,A)E \subseteq V$.
	
	(b)
	It follows from the assumption and from Theorem~\ref{thm:countable-individual-range-condition} that, for some $t_0 \in S$, we have $e^{t_0A}E \subseteq V$. By the semigroup law, the same is also true for all times $t \ge t_0$. 
\end{proof}

We will use assertion~(a) of the corollary in the next section in the proof of our main theorem. 

\begin{remarks}
	(a) Corollary~\ref{cor:eventually-in-subspace-countable}(b) is related to the theory of eventually positive semigroups:
	If $(e^{tA})_{t \in [0,\infty)}$ is a $C_0$-semigroup on a Banach lattice $E$, and $u \in E_+$ is a quasi-interior point, an assumption that occurs frequently in the context of eventually positive semigroups is the existence of a time $t_0$ such that $e^{t_0 A}E \subseteq E_u$; see, for instance, \cite[Section~5]{DanersGlueckKennedy2016a}.
	Corollary~\ref{cor:eventually-in-subspace-countable}(b) shows that this is equivalent to the a priori weaker assumption that each orbit of the semigroup is eventually in $E_u$; here we have tacitly used the semigroup law.
	
	(b) On the other hand, the theory of eventually positive semigroups also shows that a similar result as in Corollary~\ref{cor:eventually-in-subspace-countable}(b) does not hold if we replace subspaces with cones:
	In \cite[Examples~5.7 and~5.8]{DanersGlueckKennedy2016a}, one can find an example of a semigroup $(e^{tA})_{t \in [0,\infty)}$ on a Banach lattice $E$ such that, for each $f \in E_+$, the orbit of $f$ is eventually in $E_+$ (thus, we can take the countable subset $S$ to be $\bbN$), but one does not have $e^{tA}E_+ \subseteq E_+$ for any $t \in (0,\infty)$.
	
	(c)
	In a similar way and again with the aid of \cite[Example~5.7]{DanersGlueckKennedy2016a}, one can see that Corollary~\ref{cor:eventually-in-subspace-countable}(a) need not hold if $V$ is merely a cone but not a subspace.
	
	(d) 
	By applying Corollary~\ref{cor:eventually-in-subspace-countable}(b) to $V = \dom A$, we can see that a $C_0$-semigroup is eventually differentiable (in the sense of \cite[Definition~II.4.13]{EngelNagel2000}) if each of its individual orbits is eventually differentiable. 
	It turns out, though, that the latter result even holds for much more general classes of operator-valued functions than only $C_0$-semigroups; this was recently proved by Peruzzetto in \cite[Proposition~3.11]{Peruzzetto2022} (also based on Baire's category theorem, but with a different technical set-up).
\end{remarks}

To derive Theorem~\ref{thm:countable-individual-range-condition} from Baire's theorem, we need the concept of \emph{operator ranges}.
A subspace $V$ of a Banach space $F$ is called an \emph{operator range} if there exists a Banach space $E$ and a bounded linear operator $T:E\to F$ such that $T(E)=V$. For more details about operator ranges, we refer the reader to \cite{Cross1980, CrossOstrovskiiShevchik1995}.  Recent results about the eventual invariance of operator ranges were given by the present authors in \cite[Section~2]{AroraGlueck2023}.
Here, we only recall some of their basic properties.
The proof of the following proposition can be found in \cite[Proposition~2.1]{Cross1980}:

\begin{proposition}
	\label{prop:characterization-operator-range}
	Let $F$ be a Banach space and let $V \subseteq F$ be a vector subspace of $F$. The following are equivalent.
	\begin{enumerate}[\upshape (i)]
		\item The subspace $V$ is an operator range.
		\item There exists a closed operator on $F$ whose domain is $V$.
		\item There exists a complete norm $\norm{\argument}_V$ on $V$ which makes the embedding $(V, \norm{\argument}_V) \hookrightarrow (F, \norm{\argument}_F)$ is continuous.
	\end{enumerate}
\end{proposition}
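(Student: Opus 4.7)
The plan is to run the cycle \textup{(i)} $\Leftrightarrow$ \textup{(iii)} $\Leftrightarrow$ \textup{(ii)} by invoking three standard constructions: the quotient norm of a bounded operator, the inclusion $V \hookrightarrow F$ itself as a ``witness'' operator, and the graph of a closed operator viewed as an auxiliary Banach space.

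For \textup{(i)} $\Rightarrow$ \textup{(iii)}, I would transfer the quotient norm: given a bounded $T \colon E \to F$ with $T(E) = V$, set $\norm{v}_V := \inf \{ \norm{e}_E : Te = v \}$. This makes $(V, \norm{\argument}_V)$ isometrically isomorphic to the quotient Banach space $E / \ker T$ and hence complete; and the inequality $\norm{v}_F = \norm{Te}_F \le \norm{T} \norm{e}_E$, taken over all preimages $e$ of $v$, yields $\norm{v}_F \le \norm{T} \norm{v}_V$, i.e.\ continuity of the embedding. The reverse direction \textup{(iii)} $\Rightarrow$ \textup{(i)} is essentially free: take $E := (V, \norm{\argument}_V)$ and let $T$ be the continuous inclusion $V \hookrightarrow F$, whose range is precisely $V$.

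For \textup{(i)} $\Rightarrow$ \textup{(ii)}, I would first factor $T$ through the induced injection $\tilde{T} \colon E/\ker T \to F$, which is a bounded linear bijection onto $V$. Its algebraic inverse $\tilde{T}^{-1} \colon V \to E/\ker T$ then defines an operator from $F$ with domain $V$ into the Banach space $E/\ker T$; its graph in $F \times E/\ker T$ is the image under coordinate swap of the (automatically closed) graph of $\tilde{T}$, and is therefore closed. For \textup{(ii)} $\Rightarrow$ \textup{(i)}, given a closed operator $A$ on $F$ with $\dom A = V$ taking values in some Banach space $Y$, I would form the graph $\Gamma := \{(v, Av) : v \in V\} \subseteq F \times Y$, which is closed and hence a Banach space under the sum norm; the first-coordinate projection $\Gamma \to F$ is then a bounded linear operator whose range is exactly $V$.

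I do not anticipate any genuine obstacle: each construction is classical, and the verifications amount to unravelling definitions. The only point requiring care is the interpretation of \textup{(ii)}: the closed operator with domain $V$ should be allowed to take values in \emph{some} Banach space (for instance $E/\ker T$, or the graph space above), rather than necessarily in $F$ itself. Under that reading the proof flows cleanly; if one insists on an operator from $F$ to $F$, one can start from the complete norm given by \textup{(iii)} and post-compose with a suitable bounded map into $F$, but the broader reading of \textup{(ii)} makes the equivalence cleanest.
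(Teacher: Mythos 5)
The paper does not prove this proposition at all --- it simply cites \cite[Proposition~2.1]{Cross1980} --- so there is no in-paper argument to compare against; your write-up is a self-contained proof of a quoted result. Your treatment of (i) $\Leftrightarrow$ (iii) (quotient norm on $E/\ker T$ transported to $V$, and conversely the inclusion $(V,\norm{\argument}_V)\hookrightarrow F$ as the witnessing operator) is correct and is the standard argument, and your (i) $\Leftrightarrow$ (ii) via the graph of $\tilde{T}^{-1}$ and the first-coordinate projection of the graph is likewise fine, \emph{provided} the closed operator in (ii) is allowed to take values in an arbitrary Banach space. You were right to flag that interpretive point.

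The one genuine error is your closing remark that, if one insists on an operator from $F$ into $F$, the construction can be repaired by ``post-composing with a suitable bounded map into $F$.'' This cannot work in general, because the $F$-valued version of (ii) is actually \emph{false}: take $F=\ell^2$ and $V=\ell^1$, which is an operator range in $\ell^2$ via the bounded inclusion $\ell^1\hookrightarrow\ell^2$. If $A\colon \ell^2\supseteq \ell^1\to\ell^2$ were closed with domain exactly $\ell^1$, its graph norm would be a complete norm on $\ell^1$ dominating $\norm{\argument}_{\ell^2}$, hence (by the closed graph theorem applied to the identity between two such complete norms) equivalent to $\norm{\argument}_{\ell^1}$; the graph would then exhibit $(\ell^1,\norm{\argument}_1)$ as isomorphic to a closed subspace of $\ell^2\oplus\ell^2\cong\ell^2$, which is impossible since $\ell^1$ is not isomorphic to a Hilbert space. (Post-composition with a bounded $S$ also destroys closedness unless $S$ is bounded below, i.e.\ an isomorphic embedding of $E/\ker T$ into $F$, which need not exist --- the same example shows this.) So the ``broader reading'' of (ii) is not merely the cleanest option; it is the only one under which the equivalence holds, and it is how the result is stated in the operator-range literature. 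With that caveat removed, your proof is complete.
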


Obviously, the image of an operator range under a bounded linear operator is again an operator range.
A bit more interestingly, the same holds for pre-images:

\begin{proposition}
	\label{prop:preimage-operator-range}
	Let $S: E \to F$ be a bounded linear operator between Banach spaces $E$ and $F$.
	If a vector subspace $V \subseteq F$ is an operator range in $F$, then the pre-image $U := S^{-1}(V)$ is an operator range in $E$.
\end{proposition}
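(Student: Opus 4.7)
The plan is to use the characterization of operator ranges via complete intrinsic norms, namely part~(iii) of Proposition~\ref{prop:characterization-operator-range}. Since $V$ is an operator range in $F$, equip $V$ with a complete norm $\norm{\argument}_V$ such that the inclusion $(V,\norm{\argument}_V) \hookrightarrow (F,\norm{\argument}_F)$ is continuous. The natural candidate for a complete norm on $U := S^{-1}(V)$ is the graph-type norm
\[
	\norm{x}_U := \norm{x}_E + \norm{Sx}_V \qquad (x \in U).
\]
Proving that $\norm{\argument}_U$ satisfies the hypotheses of Proposition~\ref{prop:characterization-operator-range}(iii) will immediately yield the claim.

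First I would check the easy parts: that $\norm{\argument}_U$ is genuinely a norm on $U$ (which is clear since $\norm{\argument}_E$ and $\norm{\argument}_V$ are, and $U$ is the natural domain where both expressions make sense), and that the embedding $(U,\norm{\argument}_U) \hookrightarrow (E,\norm{\argument}_E)$ is continuous (this is immediate from $\norm{x}_E \le \norm{x}_U$). The substantive step, and the only place where anything needs to be said, is completeness.

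For completeness, take a Cauchy sequence $(x_n)_{n\in\bbN}$ in $(U,\norm{\argument}_U)$. By definition of $\norm{\argument}_U$, the sequence $(x_n)$ is Cauchy in $E$ and $(Sx_n)$ is Cauchy in $(V,\norm{\argument}_V)$. Using completeness of $E$ we obtain $x \in E$ with $x_n \to x$ in $E$, and using completeness of $(V,\norm{\argument}_V)$ we obtain $v \in V$ with $Sx_n \to v$ in $(V,\norm{\argument}_V)$. The key compatibility step is to identify $Sx$ with $v$: on the one hand, $Sx_n \to Sx$ in $F$ by continuity of $S$; on the other hand, $Sx_n \to v$ in $F$ as well, because the embedding $(V,\norm{\argument}_V) \hookrightarrow (F,\norm{\argument}_F)$ is continuous. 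Hence $Sx = v \in V$, so $x \in S^{-1}(V) = U$, and then $\norm{x_n - x}_U = \norm{x_n - x}_E + \norm{Sx_n - v}_V \to 0$.

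The main (and essentially only) obstacle is this two-limit matching argument; once it is in place, an appeal to Proposition~\ref{prop:characterization-operator-range} concludes the proof. I do not expect any hidden difficulty, since this is essentially the standard graph-norm construction transported through $S$.
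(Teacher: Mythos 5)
Your proof is correct and follows exactly the same route as the paper: the graph-type norm $\norm{x}_U = \norm{x}_E + \norm{Sx}_V$ together with Proposition~\ref{prop:characterization-operator-range}(iii). The only difference is that you spell out the completeness verification (the two-limit matching via the continuous embedding of $V$ into $F$), which the paper leaves to the reader.
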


\begin{proof}
	Let $\norm{\argument}_V$ denote a complete norm on $V$ which makes the embedding into $F$ continuous (such a norm exists by Proposition~\ref{prop:characterization-operator-range}). 
	We define a norm $\norm{\argument}_U$ on $U = S^{-1}(V)$ by the formula
	\[
		\norm{x}_U:= \norm{x}_E + \norm{S x}_V
	\]
	for all $x\in U$. 
	It is not difficult to check that $(U, \norm{\argument}_U)$ is a Banach space, and obviously the inclusion $(U, \norm{\argument}_U) \hookrightarrow (E, \norm{\argument}_E)$ is continuous; thus, $U$ is an operator range.
\end{proof}

It is a classical observation in functional analysis that, as a consequence of Baire's theorem, a countable union of closed proper subspaces of a Banach space $F$ cannot be equal to $F$.
For the proof of Theorem~\ref{thm:countable-individual-range-condition}, we recall that the same is true not only for closed subspaces but even for operator ranges:

\begin{proposition}
	\label{prop:necessary-operator-range}
	Let $F$ be a Banach space and let $U\subseteq F$ be an operator range. If $U$ is a proper subspace (i.e., $U \not= F)$, then it is a meagre set in $F$.
	
	Consequently, if $F = \cup_{k\in\bbN} U_k$, where each $U_k\subseteq F$ is an operator range, then there exists $k\in\bbN$ such that $U_k=F$.
\end{proposition}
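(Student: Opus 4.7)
The plan is to derive the first assertion from the core argument of the open mapping theorem and then obtain the consequence as an immediate Baire-category deduction. By definition of an operator range, we may fix a Banach space $E$ and a bounded linear operator $T \colon E \to F$ with $T(E) = U$. Writing $\overline{B}_n$ for the closed ball of radius $n$ about the origin of $E$, we then have $U = \bigcup_{n \in \bbN} T(\overline{B}_n)$.

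To prove the first part I would argue by contraposition: assume $U$ is non-meagre in $F$ and show $U = F$. Non-meagreness forces some $T(\overline{B}_n)$ to have closure in $F$ with non-empty interior. Translating and rescaling via the linearity of $T$, this yields a $\delta > 0$ such that $\overline{T(\overline{B}_1)} \supseteq B_F(0,\delta)$. The standard completeness iteration --- an absolutely convergent series trick exploiting the completeness of $E$ --- then upgrades this to $T(\overline{B}_2) \supseteq B_F(0,\delta)$, so that $T$ is surjective and $U = F$. This iteration step is the only non-routine ingredient, and it is exactly the argument underlying the open mapping theorem; no surjectivity of $T$ is needed as a hypothesis because we only invoke non-meagreness of the image.

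For the consequence, suppose for contradiction that every $U_k$ is a proper subspace of $F$. By the first part each $U_k$ is then meagre in $F$, and hence $F = \bigcup_{k \in \bbN} U_k$ is a countable union of meagre sets, making $F$ meagre in itself. This contradicts Baire's category theorem applied to the complete metric space $F$, so some $U_k$ must coincide with $F$. The main obstacle, such as it is, lies not in any new idea but in carefully recycling the open mapping proof in a form that delivers the dichotomy \emph{either $U = F$ or $U$ is meagre}, rather than merely the classical surjective case.
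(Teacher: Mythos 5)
Your argument is correct and is essentially the paper's proof: the paper simply cites the version of the open mapping theorem stating that a bounded operator with non-meagre range is surjective (Rudin, Theorem~2.11), whereas you sketch its standard proof, and the second assertion follows from Baire's theorem in both cases. No gap.
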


\begin{proof}
	Since $U$ is an operator range, there exists a Banach space $E$ and $T\in \calL(E,F)$ such that $T(E)=U$. If $U$ is not a meagre set, then by a version of open mapping theorem \cite[Theorem~2.11]{Rudin1991}, we have $U=F$. The second assertion follows now by Baire's theorem.
\end{proof}

We conclude this section with the proof of Theorem~\ref{thm:countable-individual-range-condition}.

\begin{proof}[Proof of Theorem~\ref{thm:countable-individual-range-condition}]
	Enumerate the elements of $\calT$ as $T_1, T_2, \ldots$ (where some operators might occur infinitely often if $\calT$ is finite). For each $k \in \bbN$, consider the subspace
	\begin{align*}
		U_k := T_k^{-1}(V)
	\end{align*}
	of $E$. 
	According to Propositions~\ref{prop:characterization-operator-range} and~\ref{prop:preimage-operator-range}, each of the spaces $U_k$ is an operator range in $E$, and the assumption of the theorem implies that $\cup_{k \in \bbN} U_k = E$.
	Hence, Proposition~\ref{prop:necessary-operator-range} shows that there exists an index $k_0$ for which we have $U_{k_0} = E$; thus, $T_{k_0}E \subseteq V$.
\end{proof}

\section{Proof of the main result}
	\label{section:proof}

The proof of Theorem~\ref{thm:main} is based on a couple of auxiliary results. 
We start with the following finite series expansion of resolvents.

\begin{lemma}
	\label{lem:resolvent-expansion-at-multiple-points}
	Let $A: E \supseteq \dom{A} \to E$ be a closed linear operator on a complex Banach space $E$,
	let $m \ge 0$ be an integer and let $\lambda, \mu_1, \ldots, \mu_m \in \bbC$ 
	be points in the resolvent set of $A$.
	Then the expansion formula
	\begin{align*}
		\Res(\lambda,A)
		= &
		\sum_{k=1}^{m} 
		\Big( \prod_{j=1}^{k-1} (\mu_j - \lambda) \Big) 
		\Big( \prod_{j=1}^k \Res(\mu_j,A) \Big) 
		\\
		&
		+
		\Res(\lambda,A)
		\Big( \prod_{j=1}^m (\mu_j - \lambda) \Big) 
		\Big( \prod_{j=1}^m \Res(\mu_j,A) \Big)
	\end{align*}
	holds.
\end{lemma}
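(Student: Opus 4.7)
The plan is to prove the identity by induction on $m$, using the classical resolvent identity as the engine of the induction step.

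For the base case $m = 0$, the sum is empty and the two products on the right are empty products (hence equal to $\id$), so the formula reduces to the tautology $\Res(\lambda,A) = \Res(\lambda,A)$.

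For the induction step, I would assume the formula holds for $m-1$, so
\begin{align*}
	\Res(\lambda,A)
	&= \sum_{k=1}^{m-1}\Big(\prod_{j=1}^{k-1}(\mu_j - \lambda)\Big)\Big(\prod_{j=1}^{k}\Res(\mu_j,A)\Big) \\
	&\quad + \Res(\lambda,A)\Big(\prod_{j=1}^{m-1}(\mu_j - \lambda)\Big)\Big(\prod_{j=1}^{m-1}\Res(\mu_j,A)\Big),
\end{align*}
and apply the classical resolvent identity
\[
	\Res(\lambda,A) = \Res(\mu_m,A) + (\mu_m - \lambda)\Res(\lambda,A)\Res(\mu_m,A)
\]
to the leading $\Res(\lambda,A)$ factor in the remainder term. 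Since any two resolvents $\Res(\mu_i,A)$ and $\Res(\mu_j,A)$ commute (a consequence of the resolvent identity on $\rho(A)$), the factor $\Res(\mu_m,A)$ produced by this substitution may be absorbed into $\prod_{j=1}^{m-1}\Res(\mu_j,A)$ to yield $\prod_{j=1}^{m}\Res(\mu_j,A)$, and the scalar $(\mu_m - \lambda)$ extends $\prod_{j=1}^{m-1}(\mu_j - \lambda)$ to $\prod_{j=1}^{m}(\mu_j - \lambda)$. The first resulting summand is precisely the $k=m$ term missing from the sum, while the second is the new remainder term. Adding the surviving $k=1,\dots,m-1$ terms from the induction hypothesis yields the claim for $m$.

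There is essentially no obstacle beyond keeping the indices straight; the only things one needs are the closedness of $A$ (ensuring $\Res(\mu,A)$ maps $E$ into $\dom A$ so that all compositions are well-defined on $E$), the commutativity of resolvents at different spectral parameters, and the classical resolvent identity. An alternative, non-inductive route would be to telescope: write $\Res(\lambda,A) = \Res(\mu_1,A) + (\mu_1 - \lambda)\Res(\lambda,A)\Res(\mu_1,A)$, then iterate the same substitution on the trailing $\Res(\lambda,A)$ a total of $m$ times. Both approaches lead to the stated finite expansion.
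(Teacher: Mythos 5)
Your proof is correct and follows essentially the same route as the paper, which simply states that the formula follows from an iterative (inductive) application of the resolvent identity to the pairs $\lambda,\mu_1$, then $\lambda,\mu_2$, and so on; your induction on $m$, substituting $\Res(\lambda,A)=\Res(\mu_m,A)+(\mu_m-\lambda)\Res(\lambda,A)\Res(\mu_m,A)$ into the remainder term and using the commutativity of resolvents, is exactly this iteration written out in full.
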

\begin{proof}
	The formula follows from an iterative application of the resolvent identity:~first apply it to the numbers $\lambda,\mu_1$,
	then to the numbers $\lambda,\mu_2$, and proceed inductively.
\end{proof}

\begin{remark}
	\label{rem:dom-assumption}
	Suppose the domination assumption in Setting~\ref{sett:main} is satisfied and $\mu_1,\mu_2,\ldots,\mu_n$ 
	are real numbers in the resolvent set of $A$. Then by the closed graph theorem, $\prod_{j=1}^n \Res(\mu_j,A)$ is a bounded operator from $E$ to $E_u$. Hence for every $f\in E_{\bbR}$, we have $-u \preceq \prod_{j=1}^n \Res(\mu_j,A)f \preceq u$.
\end{remark}

By using the previous lemma, we now show how upper/lower estimates for the resolvent at a point
can be transferred to other points in the resolvent set.
Related results can be found in \cite[Section~4]{AroraGlueck2021b}, but the point of the following theorem is that the number $\mu$ is allowed to depend on $f$, which makes the situation more subtle.

\begin{theorem}
	\label{thm:half-estimates}
	In Setting~\ref{sett:main}, let both the domination and spectral assumption be satisfied. 
	\begin{enumerate}[\upshape (a)]
		\item 
		Assume that for each $0 \le f \in E$ there exists a number $\mu > \lambda_0$ 
		in the resolvent set of $A$ such that $\Res(\mu,A)f \succeq -u$.
		
		Then we have $\Res(\lambda,A)f \succeq -u$ for all $0 \le f \in E$ and all $\lambda < \lambda_0$ in the resolvent set of $A$.
		
		\item 
		Assume that for each $0 \le f \in E$ there exists a number $\mu < \lambda_0$ 
		in the resolvent set of $A$ such that $\Res(\mu,A)f \preceq u$.
		
		Then we have $\Res(\lambda,A)f \preceq u$ for all $0 \le f \in E$ and all $\lambda > \lambda_0$ in the resolvent set of $A$.
	\end{enumerate}
\end{theorem}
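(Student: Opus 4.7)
The plan is to transfer the single-point bound furnished by the hypothesis across $\lambda_0$ using the iterated resolvent identity of Lemma~\ref{lem:resolvent-expansion-at-multiple-points}, absorbing as many terms as possible into $E_u$ via the domination assumption. I describe (a); part (b) is handled symmetrically by reflecting across $\lambda_0$ and swapping the roles of the two halves of $\rho(A) \cap \bbR$.

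\emph{Preliminaries.} Since $v$ is an eigenvector, $v \in \dom{A^n}$, so the domination assumption gives $v \in E_u$ and hence $v \preceq u$; together with $v \succeq u$ from the spectral assumption this yields $E_u = E_v$ with equivalent gauge norms. Consequently, the spectral projection $P$ associated with $\lambda_0$ (whose range lies in the algebraic eigenspace and is therefore contained in $\dom{A^n} \subseteq E_u$) maps $E$ boundedly into $E_u$, and by Remark~\ref{rem:dom-assumption} any product of $n$ real resolvents of $A$ is bounded as a map $E \to E_u$.

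\emph{Main step.} Fix $0 \le f \in E$ and $\lambda < \lambda_0$ in $\rho(A)$, and let $\mu > \lambda_0$ in $\rho(A)$ satisfy $\Res(\mu,A)f \succeq -u$, as provided by the hypothesis. Applying Lemma~\ref{lem:resolvent-expansion-at-multiple-points} with $m = n$ and $\mu_1 = \cdots = \mu_n = \mu$ gives
\[
\Res(\lambda,A)f = \sum_{k=1}^{n} (\mu-\lambda)^{k-1} \Res(\mu,A)^k f + (\mu-\lambda)^n \Res(\lambda,A) \Res(\mu,A)^n f,
\]
where every coefficient $(\mu-\lambda)^{k-1}$ is strictly positive because $\mu > \lambda_0 > \lambda$. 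After commuting $\Res(\lambda,A)$ past $\Res(\mu,A)^n$, the remainder is an image of $\Res(\lambda,A)f \in \dom{A}$ under $n$ further real resolvents, so it lies in $\dom{A^{n+1}} \subseteq E_u$; the $k = n$ summand lies in $E_u$ by the same remark; and the $k=1$ summand is $\succeq -u$ by hypothesis.

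\emph{Intermediate iterates and obstacle.} It remains to bound $\Res(\mu,A)^k f$ for $k = 2,\ldots,n-1$. I would decompose $\Res(\mu,A)^k f = \Res(\mu,A)^k P f + R(\mu)^k f$, where $R(\mu) := \Res(\mu,A)(I-P)$ is the resolvent of the reduced operator $\tilde A := A|_{(I-P)E}$; the first summand lies in $PE \subseteq E_u$. Since $Pf = \alpha_f v/(\mu-\lambda_0) \in E_u$, the hypothesis rewrites as $R(\mu)f \succeq -u$, and $\lambda_0$ is a regular point for $\tilde A$. Iterating the resolvent expansion of Lemma~\ref{lem:resolvent-expansion-at-multiple-points} for $R$ and using that $R(\mu)^n f = \Res(\mu,A)^n(I-P)f \in E_u$, one propagates the bound $R(\mu)f \succeq -u$ to all $R(\mu)^k f$ for $k \le n-1$. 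Summing up and choosing $D > 0$ large enough to dominate every $E_u$-contribution furnishes $\Res(\lambda,A)f + Du \ge 0$, i.e.\ $\Res(\lambda,A)f \succeq -u$. The hardest step is this last propagation: without positivity of $\Res(\mu,A)$ (or of $R(\mu)$), the single-point bound at $\mu$ does not obviously pass to the iterates $R(\mu)^k f$, and the resolution must exploit both the finite-dimensional pole contribution absorbed by $P$ and the fact that $n$ applications of any real resolvent land in $E_u$.
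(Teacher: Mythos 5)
Your overall strategy (Lemma~\ref{lem:resolvent-expansion-at-multiple-points} plus absorption of terms into $E_u$ via Remark~\ref{rem:dom-assumption}) is the right one, but there is a genuine gap exactly where you flag it, and your proposed resolution does not close it. By setting $\mu_1=\cdots=\mu_n=\mu$ with the \emph{single} $\mu$ that the hypothesis provides for $f$, you are left with the intermediate terms $\Res(\mu,A)^k f$, $2\le k\le n-1$, about which the hypothesis says nothing: it controls one application of the resolvent to a positive vector, not iterates at the same point. Your fix via the spectral projection does not work either. Writing $\Res(\mu,A)^k f=\Res(\mu,A)^kPf+R(\mu)^kf$ only shifts the problem to $R(\mu)^k f$, and the bound $R(\mu)f\succeq -u$ cannot be ``propagated'' to $R(\mu)^2f=R(\mu)\bigl(R(\mu)f\bigr)$: the vector $R(\mu)f$ is not positive (only bounded below by a multiple of $-u$), there is no positivity of $R(\mu)$ available, and even if you positivize $R(\mu)f$ and invoke the hypothesis, you obtain a bound at some \emph{new} point $\mu'$ depending on that vector, not at the original $\mu$. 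So the expansion with a repeated node cannot be completed.

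The paper's proof avoids this by exploiting the fact that the hypothesis holds for \emph{every} positive vector with a vector-dependent $\mu$ --- which is precisely why Lemma~\ref{lem:resolvent-expansion-at-multiple-points} is stated for distinct points $\mu_1,\dots,\mu_m$. One constructs $\mu_1,\dots,\mu_n>\lambda_0$ recursively: assuming $\Res(\mu_k,A)\cdots\Res(\mu_1,A)f\succeq -u$, one uses $v\succeq u$ to choose $c>0$ with $g:=\Res(\mu_k,A)\cdots\Res(\mu_1,A)f+cv\ge 0$, applies the hypothesis to this positive vector $g$ to obtain a fresh $\mu_{k+1}$ with $\Res(\mu_{k+1},A)g\succeq -u$, and then subtracts $c\,\Res(\mu_{k+1},A)v=\frac{c}{\mu_{k+1}-\lambda_0}v\preceq u$ (using $v\in\dom{A^n}\subseteq E_u$) to recover the estimate for $k+1$. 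With all iterates controlled in this way, Lemma~\ref{lem:resolvent-expansion-at-multiple-points} with $m=n$ and Remark~\ref{rem:dom-assumption} for the remainder term finish the proof; note that no spectral projection is needed. The key idea you are missing is to re-apply the individual hypothesis at each step to the positivized iterate, accepting a different $\mu_j$ each time, rather than trying to force a single $\mu$ through the whole expansion.
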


\begin{proof}
	It is sufficient to prove (a); assertion (b) will then follow by replacing $A$ with $-A$. To this end, let $0 \le f \in E$ and let $n \ge 0$ be the integer from the domination assumption (for which we have $\dom A^n \subseteq E_u$).
	We will recursively construct numbers $\mu_1,\ldots, \mu_{n}$ larger than $\lambda_0$ and in the resolvent set of $A$ such that
	\begin{align}
		\label{eq:thm:half-estimates:product-lower-estimate}
		\Res(\mu_k,A) \cdots \Res(\mu_1,A)f \succeq -u
	\end{align}
	for all $k \in \{0,\ldots,n\}$. Then Lemma~\ref{lem:resolvent-expansion-at-multiple-points} (applied to $m=n$) along with Remark~\ref{rem:dom-assumption} yields the required estimate.
	
	For the construction, first note that since $f \ge 0 \ge -u$, the case $k=0$ is trivial.
	Now, assume that $\mu_1, \ldots, \mu_k$ have already been constructed for some $k<n$ such that the estimate~\eqref{eq:thm:half-estimates:product-lower-estimate} holds. 
	Then, because the eigenvector $v$ from the spectral assumption satisfies $v\succeq u$, there exists a constant $c > 0$ such that 
	\[
		g := \Res(\mu_k,A) \ldots \Res(\mu_1,A)f + c v \ge 0.
	\]
	By assumption, this implies that there exists $\mu_{k+1}>\lambda_0$ and a constant $d_1>0$ such that $\Res(\mu_{k+1},A)g + d_1 u \ge 0$. 
	On the other hand, combining the domination
	and spectral assumptions, we have that $0\leq v\in\ker(\lambda_0-A)\subseteq \dom{A^n}\subseteq E_u$. 
	We can thus choose a constant $d_2>0$ such that $(\mu_{k+1}-\lambda_0)\Res(\mu_{k+1},A)v=v\leq d_2 u$.
	Therefore, 
	\begin{align*}
		0 
		& \le 
		\Res(\mu_{k+1},A) \Res(\mu_k,A) \ldots \Res(\mu_1,A)f + c\ \Res(\mu_{k+1},A) v + d_1 u 
		\\
		& \le 
		\Res(\mu_{k+1},A) \Res(\mu_k,A) \ldots \Res(\mu_1,A)f + \left(\frac{cd_2}{\mu_{k+1} - \lambda_0} + d_1\right) u;
	\end{align*}
	which readily yields
	\[
		\Res(\mu_{k+1},A) \Res(\mu_k,A) \ldots \Res(\mu_1,A)f \succeq -u.
		\hfill \qedhere
	\]
\end{proof}

Now we can prove our main result.

\begin{proof}[Proof of Theorem~\ref{thm:main}]
	``(iii) $\Rightarrow$ (i)'':
	This implication is a direct consequence of \cite[Theorem~4.4 and Corollary~3.3]{DanersGlueckKennedy2016b}. 
	
	``(i) $\Rightarrow$ (ii)'': If (i) holds, then so does the spectral assumption -- due to \cite[Theorem~4.1]{DanersGlueck2017} and \cite[Corollary~3.3]{DanersGlueckKennedy2016b} -- and in turn, (ii). 
	
	``(ii) $\Rightarrow$ (iii)'':
	We assume (ii), and we only have to show that $\dom A \subseteq E_u$.
	Due to the individual anti-maximum principle in assertion~(ii), Theorem~\ref{thm:half-estimates}(b) tells us that $\Res(\lambda,A)f \preceq u$ for all $\lambda > \lambda_0$ and all $f \in E_+$.
	By combining this with the individual maximum principal in assertion~(ii), we see that, for all $f \in E_+$, the estimate $0 \le \Res(\lambda,A)f \preceq u$ holds for all $\lambda$ in a $f$-dependent right neighbourhood of $\lambda_0$.
	
	Because the positive cone of a Banach lattice is generating, we've shown that, each $f \in E$ satisfies $\Res(\lambda,A)f \in E_u$ for all $\lambda$ in a $f$-dependent right neighbourhood of $\lambda_0$.
	We can now employ Corollary~\ref{cor:eventually-in-subspace-countable}(a) to conclude that $\dom A \subseteq E_u$.
\end{proof}

\begin{remark}
	\label{rem:spectral-assumption}
	Since both \cite[Theorem~4.1]{DanersGlueck2017} and \cite[Corollary~3.3]{DanersGlueckKennedy2016b} do not impose any domination assumptions,  
	such assumptions are not needed for the implication (i) $\Rightarrow$ (ii) in Theorem~\ref{thm:main} to be true.
\end{remark}

It is possible to give a proof of the implication ``(ii) $\Rightarrow$ (iii)'' which does not rely on the concept of operator ranges and Baire's theorem. 
A major drawback of the alternative proof is that it is technically more involved and, thus, less transparent. 
Moreover, it uses consequences of the resolvent equation in more than one place, due to which it is less likely to be adaptable to different situations.
However, for the benefit of the interested reader, we include the proof here.

\begin{proof}[Alternate proof of ``\,{\upshape (ii)} $\Rightarrow$ {\upshape (iii)}\!'' in Theorem~\ref{thm:main}]	
	Assume that (ii) is true. We want to prove that $\dom{A} \subseteq E_u$, for which it suffices to prove that $\Res(\mu,A)E \subseteq E_u$ for some $\mu\in \rho(A)$.

	Recall again that $n \ge 0$ denotes the integer that occurs in the domination assumption.
	If $n=0$, the conclusion is, of course, trivial. So, let $n\geq 1$ and fix $f\in E_+$.
	We assert that there exist $\mu_1,\ldots,\mu_n\in \rho(A)$ such that
	\begin{align}
		\label{thm:main:product-double-estimate}
		0\leq \Res(\mu_k,A) \ldots \Res(\mu_1,A)f \preceq u
	\end{align}
	for all $k\in \{1,\ldots,n\}$. 
	
	To see this, first note that since $f\geq 0$, condition~(ii) implies that for some $\lambda<\lambda_0$, we have $\Res(\lambda,A)f \leq 0\preceq u$. We can hence apply Theorem~\ref{thm:half-estimates}(b), and doing so in conjunction with~(ii), gives the existence of $\mu_1>\lambda_0$ such that $0\leq \Res(\mu_1,A)f \preceq u$. This establishes the assertion for the case $k=1$. 
	
	To proceed inductively, we now assume that for some $k<n$, there exist real numbers $\mu_1,\ldots,\mu_k\in \rho(A)$ satisfying the estimate~\eqref{thm:main:product-double-estimate}. 
	Then
	\[
		0\leq g:= \Res(\mu_k,A) \ldots \Res(\mu_1,A)f\in E.
	\] 
	Repeating the above argument for $g$ instead of $f$, we find $\mu_{k+1}>\lambda_0$ such that $0\leq \Res(\mu_{k+1},A)g\preceq u$, which completes the induction step. 
	
	In particular, it follows that $\prod_{j=1}^k \Res(\mu_j,A) f \in E_u$ for all $k\in \{1,\ldots,n\}$. Now, appealing to Lemma~\ref{lem:resolvent-expansion-at-multiple-points} together with Remark~\ref{rem:dom-assumption}, we obtain that $\Res(\mu,A)f \in E_u$ for each $\mu\in\rho(A)$.
	Since $E_+$ spans $E$, ergo, for each $\mu \in \rho(A)$ one has the inclusion $\Res(\mu,A) E \subseteq E_u$, as required.
\end{proof}

For eventually differentiable semigroups, \cite[Corollary~5.3]{DanersGlueckKennedy2016b} gives a characterization for individual eventual positivity of a semigroup whose generator satisfies the domination assumption from Setting~\ref{sett:main}. In this case, Theorem~\ref{thm:main} says that if the domination assumption does not hold for $n=1$, then the individual maximum and anti-maximum principle cannot both be true. This is also illustrated in Section~\ref{section:applications}.

\section{A sufficient condition for the domination assumption}
	\label{section:sufficient-condition-domination}

For sufficiently well-behaved concrete differential operators $A$, one way to check the domination assumption in Setting~\ref{sett:main} will typically be as follows: 
By employing an elliptic regularity result one gets that $\dom{A^n}$ is contained in a Sobolev space of an order that scales with $n$. So if $n$ is sufficiently large, one can employ a Sobolev embedding theorem (and combine it, if necessary, with the boundary conditions encoded in the domain of $A$) to obtain the embedding $\dom(A^n) \subseteq E_u$.
One problem with this approach is that it fails for operators for which elliptic regularity results are not available.

In fact, there are various differential operators for which elliptic regularity does not hold or is not known, but for which one has a parabolic estimate that yields $e^{tA}E \subseteq E_u$ for all $t > 0$, where $(e^{tA})_{t \in [0,\infty)}$ denotes the $C_0$-semigroup generated by $A$.
The following theorem (when applied to $V = E_u$) shows that a specific quantitative form of the property $e^{tA}E \subseteq E_u$ is equivalent to our domination assumption $\dom(A^n) \subseteq E_u$.

\begin{theorem}
	\label{thm:polynomial-smoothing}
	Let $E$ and $V$ be Banach spaces and assume that $V$ is continuously embedded in $E$.
	For a $C_0$-semigroup $(e^{tA})_{t \in [0,\infty)}$ on $E$, consider the following assertions:
	\begin{enumerate}[\upshape (i)]
		\item 
		There exist real numbers $t_0, q, c > 0$ such that, for all $t \in (0,t_0]$, 
		we have
		\begin{align*}
			\qquad 
			e^{tA}E \subseteq V
			\qquad \text{and} \qquad 
			\norm{e^{tA}}_{E \to V} \le c \, t^{-q}.
		\end{align*}
		
		\item 
		There exists an integer $n \ge 0$ such that $\dom(A^n) \subseteq V$.
	\end{enumerate}
	One always has {\upshape(i)} $\Rightarrow$ {\upshape(ii)}. 
	If the semigroup is analytic, then also {\upshape(ii)} $\Rightarrow$ {\upshape(i)}.
\end{theorem}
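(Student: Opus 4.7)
My plan is to treat the two directions separately. Throughout, fix $\omega > \omega_0$ (the growth bound of the semigroup) and $M \ge 1$ so that $\norm{e^{tA}}_{\calL(E)} \le M e^{\omega t}$ for all $t \ge 0$.

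For the implication \emph{(i) $\Rightarrow$ (ii)}, the approach is to represent powers of the resolvent as a Laplace transform. I would pick an integer $n > q$ and a real number $\lambda > \omega$, and then study
\[
	\Res(\lambda,A)^n f = \frac{1}{(n-1)!}\int_0^\infty t^{n-1} e^{-\lambda t} e^{tA} f \dx t,
\]
which a priori converges as a Bochner integral with values in $E$. The key point will be that it also converges in $V$. First observe that $e^{tA}E \subseteq V$ for \emph{every} $t > 0$: on $(0, t_0]$ this is (i), and for $t > t_0$ the semigroup decomposition $e^{tA}f = e^{t_0 A}(e^{(t - t_0)A}f)$ lands in $V$. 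I would then split the $V$-valued integral at $t_0$. On $(0, t_0]$, assumption (i) gives the integrand bound $c\, t^{n-1-q}\norm{f}_E$, whose integral is finite since $n > q$. On $[t_0, \infty)$, the same decomposition combined with (i) and the growth bound yields $\norm{e^{tA}f}_V \le c t_0^{-q} M e^{\omega(t-t_0)}\norm{f}_E$, and the tail integral converges since $\lambda > \omega$. Consequently $\Res(\lambda,A)^n f \in V$ for every $f \in E$, so $\dom{A^n} = \Res(\lambda,A)^n E \subseteq V$.

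For the converse \emph{(ii) $\Rightarrow$ (i)} under analyticity, I would combine the closed graph theorem with standard analytic-semigroup estimates. Fix $\lambda \in \rho(A)$. The identity mapping $(\dom{A^n}, \norm{\argument}_{A^n}) \to (V, \norm{\argument}_V)$ is well-defined by (ii), linear, and closed (both spaces embed continuously into $E$, where limits are unique), so the closed graph theorem yields a constant $C > 0$ with
\[
	\norm{x}_V \le C\norm{(\lambda - A)^n x}_E \qquad \text{for all } x \in \dom{A^n}.
\]
By analyticity, $e^{tA}f \in \dom{A^k}$ for all $t > 0$ and $k \in \bbN$, together with the classical estimate $\norm{A^k e^{tA}}_{\calL(E)} \le C_k t^{-k}$ on some bounded interval $(0, t_0]$. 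Applying the displayed inequality to $x = e^{tA}f$, expanding $(\lambda - A)^n$ via the binomial theorem, and using $t^{-k} \le t^{-n}$ for $k \le n$ and $t_0 \le 1$, gives $\norm{e^{tA}f}_V \le C' t^{-n}\norm{f}_E$ on $(0, t_0]$, which is precisely (i) with $q = n$.

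The main technical point to watch is justifying the $V$-valued Bochner integral in the first direction, in particular the strong measurability of $t \mapsto e^{tA}f$ as a $V$-valued function on $(0,\infty)$. This should follow from a short continuity argument: for every $\varepsilon > 0$, the map $t \mapsto e^{tA}f = e^{\varepsilon A}(e^{(t-\varepsilon)A}f)$ is continuous from $(\varepsilon, \infty)$ into $V$, because $e^{\varepsilon A}\colon E \to V$ is bounded by (i) and $t \mapsto e^{(t-\varepsilon)A}f$ is continuous into $E$. Beyond this, both directions should reduce to routine bookkeeping of semigroup estimates and I do not anticipate further obstacles.
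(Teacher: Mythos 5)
Your proposal is correct and follows essentially the same route as the paper: the implication (i) $\Rightarrow$ (ii) via the $V$-valued Bochner integral representation of $\Res(\lambda,A)^n f$ with the split at $t_0$ and the continuity argument for measurability, and (ii) $\Rightarrow$ (i) via the closed graph theorem combined with the analytic smoothing estimates $\norm{A^k e^{tA}} \lesssim t^{-k}$. The only cosmetic difference is in the second direction, where you expand $(\lambda-A)^n$ binomially while the paper normalizes to $0 \in \rho(A)$ and telescopes through the intermediate graph-norm spaces $\dom(A^k)$; both yield the exponent $q=n$.
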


For the case $E=L^2$ and $V=L^\infty$, and under additional assumptions on the semigroup, implication ``(i) $\Rightarrow$ (ii)'' in Theorem~\ref{thm:polynomial-smoothing} can essentially be found in \cite[Theorem~2.4.1]{Davies1989}. 
The same argument can be adapted to the general case; though 
for the convenience of the reader, we prefer to include the details.

\begin{proof}[Proof of Theorem~\ref{thm:polynomial-smoothing}]
	``(i) $\Rightarrow$ (ii)'':
	Choose a real number $\lambda$ which is strictly larger than the growth bound of $(e^{tA})_{t \in [0,\infty)}$,
	and an integer $n \ge 1$ which satisfies $n > q$.
	Fix $f \in E$. Using
	\[
		\norm{e^{tA}f}_V \le \norm{e^{t_0 A}}_{E \to V} \norm{e^{(t-t_0)}f}_E
			\quad \text{for } t \in (t_0,\infty)
	\]
	along with the estimate in (i),
	we get that
	\begin{align*}
		\int_0^\infty t^{n-1} e^{-\lambda t} \norm{e^{tA}f}_V \dx t < \infty;
	\end{align*}
	the essential point is that the singularity of $t \mapsto \norm{e^{tA}f}_V$ at $t = 0$ gets suppressed since $n-1-q > -1$. (Observe that the mapping $(0,\infty) \ni t \mapsto e^{tA}f \in V$ is continuous, so the integrand is also continuous, and in turn, measurable.)
	Therefore, 
	\begin{align*}
		\int_0^\infty t^{n-1} e^{-\lambda t} e^{tA} f \dx t \in V,
	\end{align*}
	because the integral above exists as a Bochner integral in $V$.
	At the same time, the integral also exists as a Bochner integral in $E$, and this integral is equal to $(n-1)!\Res(\lambda,A)^n f$; see for instance \cite[Corollary~II.1.1]{EngelNagel2000}.
	
	The continuity of the embedding $V$ into $E$ implies that both Bochner integrals coincide, so $(n-1)!\Res(\lambda,A)^n f \in V$.
	In particular, $\Res(\lambda,A)^n f\in V$. 
	This shows that $\dom(A^n)\subseteq V$.

	``(ii) $\Rightarrow$ (i)'':	
	We now assume that the semigroup is analytic. 
	By replacing $A$ with $A-\lambda$ for a sufficiently large number $\lambda > 0$ if necessary, we may assume that $0$ is in the resolvent set of $A$.
	Now we endow the spaces $\dom(A^k)$ for $k=0,1,\ldots,n$ with the graph norms
	\begin{align*}
		\norm{f}_{\dom(A^k)} := \norm{A^k f}_E,
	\end{align*}
	which renders them Banach spaces.
	
	If $n = 0$, then $V = E$, so there is nothing to prove; 
	hence, let $n \ge 1$ for the rest of the proof.
	We show that
	\begin{align*}
		\norm{e^{tA}}_{E \to \dom(A^n)} \le \tilde c \, t^{-n}
	\end{align*}
	for a constant $\tilde c > 0$ and all $t \in (0,n]$.
	Since the inclusion $\dom(A^n) \hookrightarrow V$ is, by the closed graph theorem, continuous, 
	assertion (i) then follows using the analyticity of the semigroup with $q = n$ and $t_0 = n$.
	
	Since the semigroup is analytic, the number
	\begin{align*}
		M := \sup_{t \in (0,1]} \norm{t A e^{tA}}_{E \to E}
	\end{align*}
	is finite \cite[Theorem~II.4.6(a) and~(c)]{EngelNagel2000}
	(this reference is formulated for semigroups that are bounded on a sector, and thus has to be applied to a rescaled version of our semigroup).
	
	Consider an integer $k \in \{0, \ldots, n-1\}$ and a vector $f \in \dom(A^k)$. 
	Then, for $t \in (0,1]$, we have $e^{tA}f \in \dom(A^{k+1})$ and
	\begin{align*}
		\norm{e^{tA}f}_{\dom(A^{k+1})} = \norm{A e^{tA} A^k f}_E \le \frac{M}{t} \norm{f}_{\dom(A^k)},
	\end{align*}
	so $\norm{e^{tA}}_{\dom(A^k) \to \dom(A^{k+1})} \le M t^{-1}$ for $t \in (0,1]$.
	For $t \in (0,n]$ this implies
	\begin{align*}
		\norm{e^{tA}}_{E \to \dom(A^n)}
		\le 
		\prod_{k=0}^{n-1}
		\norm{e^{\frac{t}{n} A}}_{\dom(A^{k}) \to \dom(A^{k+1})}
		\le 
		(nM)^n \, t^{-n},
	\end{align*}
	which concludes the proof.
\end{proof}

\begin{remarks}\label{rem:polynomial-smoothing}
	(a) 
	Observe that the implication ``(ii) $\Rightarrow$ (i)'' in Theorem~\ref{thm:polynomial-smoothing} is, in general, false without the assumption of analyticity. 
	Just consider a $C_0$-group with unbounded generator $A$ and define $V := \dom A$ to obtain a counterexample where one does not even have $e^{tA}E \subseteq V$ for any time $t$.

	(b)
	The proof of Theorem~\ref{thm:polynomial-smoothing} shows that,
	in the implication ``(i) $\Rightarrow$ (ii)'', one can choose $n$ to be the smallest integer which satisfies $n > q$ 
	(and hence, one can choose $n = q+1$ if $q$ is an integer).		
	For the converse implication, ``(ii) $\Rightarrow$ (i)'', one can choose $q$ to be equal to $n$.
	
	(c)
	Since the condition $\dom{(A^n)} \subseteq V$ is equivalent to $\Res(\lambda,A)^nE \subseteq V$ for one (equivalently: all) $\lambda$ in the resolvent set of $A$, it follows from Theorem~\ref{thm:countable-individual-range-condition} that condition~(ii) in Theorem~\ref{thm:polynomial-smoothing} is equivalent to the condition that, for some $\lambda$ in the resolvent set, the following holds:  For each $f \in E$, there exists an ($f$-dependent) integer $n \ge 0$ such that $\Res(\lambda,A)^n f\in V$.
\end{remarks}

\section{Examples and applications}
	\label{section:applications}

In this section, we consider a couple of concrete differential operators to demonstrate how Theorem~\ref{thm:main} can be applied to characterize when they satisfy the individual anti-maximum principle.

\subsection{The Laplacian with Robin boundary conditions}
	\label{subsection:laplacian-robin}

Let $\Omega$ be a bounded domain in $\bbR^d$; for the sake of simplicity we assume that $\Omega$ has $C^\infty$-boundary, and we let $\beta: \partial \Omega \to \bbR$ be a $C^1$-function. Fix $p \in (1,\infty)$. 
We consider the Robin Laplace operator $\Delta_\beta: L^p(\Omega) \supseteq \dom(\Delta_\beta) \to L^p(\Omega)$ given by
\begin{align*}
	\begin{split}
		\dom(\Delta_\beta) & = \left\{ u \in W^{2,p}(\Omega): \frac{\partial}{\partial \nu} u = \beta u \text{ on } \partial \Omega\right\}, \\ 
		\Delta_\beta u     & = \Delta u;
	\end{split}
\end{align*}
where $\frac{\partial}{\partial \nu} u$ denotes the outer normal derivative.

The operator $\Delta_\beta$ generates an analytic $C_0$-semigroup on $L^p(\Omega)$ \cite[Theorem~5.6 on p.\,189]{Tanabe1997} and it can be shown -- for instance, by applying form methods to the case $p=2$ and then extrapolating those properties to other $L^p$-spaces -- that this semigroup is both positive and irreducible. 

Let $v$ denote the eigenvector of $\Delta_\beta$ associated to the eigenvalue $\spb(A)$. Then $v$ is an element of $C(\overline{\Omega})$ and it satisfies $v\succeq \one$, where $\one$ denotes the constant one function. 
This is true even for more general elliptic operators and much rougher domains and can, for instance, be found in \cite[Theorem~4.5(b)]{ArendtterElstGlueck2020}.
Also by the positivity and the irreducibility, the eigenspace of the dual operator $A'$ corresponding to the eigenvalue $\spb(A)$ contains a strictly positive functional \cite[Proposition~C-III-3.5]{Nagel1986}.

We also note that, for $p=2$, the associated sesquilinear form has form domain $H^1(\Omega)$, which embeds continuously into $L^q(\Omega)$ for some $q>2$, so using an ultracontractivity argument (see \cite[Theorem in Section~7.3.2]{Arendt2004}), we obtain that
$
	e^{t\Delta_\beta}L^p(\Omega) \subseteq L^\infty(\Omega) = L^p(\Omega)_{\one}
$
and, even more, that the growth condition in Theorem~\ref{thm:polynomial-smoothing}(i) is satisfied. As a result, the aforementioned theorem implies that there exists $n\in\bbN$ such that $\dom\big((\Delta_\beta)^n\big)\subseteq L^p(\Omega)_{\one}$. We have thus shown that both the domination and the spectral assumptions in Setting~\ref{sett:main} are satisfied for $\lambda_0 = \spb(\Delta_\beta)$ and $u = \one$.

We also note that the resolvent of $\Delta_\beta$ is positive on the right of $\spb(\Delta_\beta)$ since the semigroup $(e^{t\Delta_\beta})_{t \in [0,\infty)}$ is positive. In particular, in the terminology of Theorem~\ref{thm:main}, the individual (and in fact, even the uniform) maximum principle is satisfied at $\lambda_0 = \spb(\Delta_\beta)$. 
Let us now discuss under which conditions the individual anti-maximum principle is also satisfied.

\begin{theorem}
	\label{thm:robin-laplace}
	The Robin Laplace operator $\Delta_\beta$ satisfies the individual anti-maximum principle at $\lambda_0 = \spb(\Delta_\beta)$ if and only if $p > d/2$.
\end{theorem}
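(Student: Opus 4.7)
The plan is to use Theorem~\ref{thm:main} as a black box to reduce the anti-maximum principle to a Sobolev-type inclusion $\dom(\Delta_\beta) \subseteq L^\infty(\Omega)$, and then decide when that inclusion holds. The preceding discussion in Subsection~\ref{subsection:laplacian-robin} has already established that, for $\lambda_0 = \spb(\Delta_\beta)$ and $u = \one$, the spectral assumption is satisfied, the domination assumption of Setting~\ref{sett:main} is satisfied for some $n \in \bbN$ (via ultracontractivity and Theorem~\ref{thm:polynomial-smoothing}), and the individual maximum principle holds because $(e^{t\Delta_\beta})_{t \in [0,\infty)}$ is positive. Hence the ``max'' half of condition~(ii) in Theorem~\ref{thm:main} is automatic, and the equivalence ``(ii)$\Leftrightarrow$(iii)'' says that the individual anti-maximum principle at $\spb(\Delta_\beta)$ holds if and only if $\dom(\Delta_\beta) \subseteq L^p(\Omega)_{\one} = L^\infty(\Omega)$. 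It thus suffices to prove that this inclusion is equivalent to $p > d/2$.

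For the sufficiency, I would invoke the classical Sobolev embedding $W^{2,p}(\Omega) \hookrightarrow C(\overline{\Omega})$, valid on a bounded domain with $C^\infty$-boundary precisely when $2 - d/p > 0$, i.e.\ when $p > d/2$. Since $\dom(\Delta_\beta) \subseteq W^{2,p}(\Omega)$ by definition, this immediately delivers $\dom(\Delta_\beta) \subseteq L^\infty(\Omega)$.

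For the necessity, assume $p \le d/2$; since $p > 1$, this forces $d \ge 3$. The plan is to exhibit an element of $\dom(\Delta_\beta)\setminus L^\infty(\Omega)$. The key observation is that any function $u \in W^{2,p}(\Omega)$ whose support is a compact subset of $\Omega$ trivially satisfies the Robin boundary condition, since both sides of $\partial u/\partial\nu = \beta u$ vanish identically near $\partial\Omega$; such a $u$ therefore lies in $\dom(\Delta_\beta)$. Fix $x_0 \in \Omega$ and a cut-off $\chi \in C_c^\infty(\Omega)$ with $\chi \equiv 1$ near $x_0$. In the subcritical case $p < d/2$, the function $u(x) := \chi(x)\,|x-x_0|^{-\alpha}$ lies in $W^{2,p}(\Omega)\setminus L^\infty(\Omega)$ for any $\alpha \in (0,\,d/p - 2)$, as is easily checked by integration in polar coordinates. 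In the critical case $p = d/2$, the power-type singularity just fails and must be replaced by a slower logarithmic blow-up: one takes $u(x) := \chi(x)\,\bigl(\log(2/|x-x_0|)\bigr)^{\beta}$ for $\beta \in (0,\,1 - 2/d)$, which is nonempty precisely because $d \ge 3$; the second derivatives then behave like $(\log(1/|x-x_0|))^{\beta-1}|x-x_0|^{-2}$, and a polar-coordinate computation with the substitution $s = -\log r$ yields $u \in W^{2,d/2}(\Omega)$ while $u \notin L^\infty(\Omega)$.

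The genuinely delicate step is the critical case $p = d/2$, where the natural power singularity sits exactly on the wrong side of the embedding threshold and has to be replaced by a logarithmic profile; this is however a standard exercise from Sobolev embedding theory. Everything else is a mechanical application of Theorem~\ref{thm:main} together with the classical $W^{2,p}\hookrightarrow L^\infty$ embedding.
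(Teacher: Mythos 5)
Your proposal is correct and follows essentially the same route as the paper: reduce via Theorem~\ref{thm:main} (using the already-verified spectral assumption, domination assumption, and maximum principle) to the inclusion $\dom(\Delta_\beta)\subseteq L^\infty(\Omega)$, settle the case $p>d/2$ by Sobolev embedding, and for $p\le d/2$ exhibit a compactly supported $W^{2,p}$-function outside $L^\infty$ — the paper simply cites Adams for the latter examples where you construct them explicitly (and your power/logarithmic constructions check out). Only a cosmetic remark: your exponent $\beta$ in the critical case clashes with the Robin coefficient $\beta$ already in use, so rename it.
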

\begin{proof}
	Since the domination and the spectral assumption are satisfied for $u=\one$ and since $\Delta_\beta$ satisfies the individual maximum principle, it follows from Theorem~\ref{thm:main} that the individual anti-maximum principle is satisfied if and only if
	\[
		\dom(\Delta_\beta) \subseteq L^2(\Omega)_{\one} = L^\infty(\Omega).
	\]
	
	If $p > d/2$, then by the Sobolev embedding theorem, $W^{2,p}(\Omega)$ embeds into $L^\infty(\Omega)$ and hence, indeed, $\dom(\Delta_\beta) \subseteq L^\infty(\Omega)$.
	
	If, on the other hand, $p \le d/2$, then there exists a function in $W^{2,p}(\Omega)$ which has compact support within $\Omega$ (and is thus an element of $\dom(\Delta_\beta)$), but is not contained in $L^\infty(\Omega)$ \cite[Examples~5.25 and~5.26]{Adams1975}. 
	Wherefore, $\dom(\Delta_\beta) \not\subseteq L^\infty(\Omega)$ in this case.
\end{proof}

A generalisation of this result will (under slightly more restrictive assumptions on $\beta$) be discussed in Section~\ref{subsection:powers-of-robin-laplacian}.

\subsection{On the Laplacian with Dirichlet boundary conditions} 

For the Laplace operator with Dirichlet boundary conditions $\Delta_{\operatorname{Dir}}$, it is also possible to characterize whether an anti-maximum principle at $\spb(\Delta_{\operatorname{Dir}})$ holds.
This was done by Sweers \cite{Sweers1997} who proved that, on a smooth bounded domain in $\bbR^d$, the individual anti-maximum principle for the Dirichlet Laplacian on $L^p$ holds if and only if $p > d$.
The crucial point here is the boundary behaviour of functions lying in the domain of the operator, which explains that $p$ needs to be larger (when compared to the Robin Laplacian) for the individual anti-maximum principle to hold.

Sweers' argument can also be rephrased to fit into the framework of our Theorem~\ref{thm:main} -- however, our theorem does not appear to significantly simplify the argument, so we refrain from discussing the details here.

We point out that for $d\ge 2$, the individual anti-maximum principle is not uniform; see \cite[Proposition~6.1(a)]{AroraGlueck2021b} and the introductions of \cite{Sweers1997,ClementPeletier1979}.

An important observation here is that while the individual anti-maximum principle holds only for some $p$, its uniform counterpart is not true for any $p$.
In fact, this characteristic is not unique to the Dirichlet Laplacian. The uniform (anti-)maximum principle is an operator inequality; hence, for a differential operator which acts on the entire $L^p$-scale, if the principle holds for some $p$, then by a simple density argument, it automatically holds for each $p$. The individual (anti-)maximum principle, on the other hand, is more subtle and its validity may rely on the choice of $p$.

\subsection{A system of Neumann Laplacians coupled by a matrix-valued potential}
\label{subsection:neumann-laplacian-coupled}

We consider a situation similar to Subsection~\ref{subsection:laplacian-robin} -- however, we couple multiple Laplace operators by a matrix-valued potential now. 
For the sake of simplicity, we consider Neumann boundary conditions only.

Let $\Omega \subseteq \bbR^d$ be a bounded domain with $C^\infty$-boundary, let $N \ge 1$ be an integer and let $V: \Omega \to \bbR^{N \times N}$ be a measurable and bounded function. 
We assume that, for each $x \in \Omega$, all off-diagonal entries of the matrix $V(x)$ are non-negative and that the matrix that is obtained from $V(x)$ by setting all diagonal entries to $0$ is irreducible. 
On the space $L^p(\Omega; \bbC^N)$, we consider the operator $A$ given by
\begin{align*}
	D(A) & = D(\Delta_{\operatorname{Neu}}) \times \ldots \times D(\Delta_{\operatorname{Neu}}), \\
	Aw   & =
	\begin{pmatrix}
		\Delta w_1 \\ \vdots \\ \Delta w_N
	\end{pmatrix}
	+ 
	V w;
\end{align*}
where $D(\Delta_{\operatorname{Neu}}) = \{h \in W^{2,p}(\Omega;\bbC) \; \frac{\partial}{\partial \nu} h = 0 \text{ on } \partial \Omega \}$ denotes the domain of the Neumann Laplace operator on $L^p(\Omega)$.
Since $V$ acts as a bounded linear operator on $L^p(\Omega; \bbC)$ and the semigroup generated by this operator is positive, it follows that the semigroup generated by $A$ is also positive;
in fact, it is even irreducible, as follows, for example, from \cite[Proposition~C-III-3.3]{Nagel1986}.

Because $A$ has compact resolvent, we can conclude that $\spb(A)$ is an algebraically simple eigenvalue of $A$ and that the corresponding eigenspace is spanned by a vector $v$ which is a quasi-interior point of the positive cone of $L^p(\Omega;\bbC^N)$; moreover, the dual eigenspace is also spanned by a strictly positive functional. This follows from \cite[Proposition~C-III-3.5]{Nagel1986}.

That fact that $v$ is a quasi-interior point means that each of the components $v_1, \ldots, v_N \in L^p(\Omega;\bbC)$ is $> 0$ almost everywhere on $\Omega$. 
In fact, we even have $v_1, \ldots, v_n \succeq \one$. 
To see this, choose a number $c \in [0,\infty)$ sufficiently large such that $V(x)+c \id_{\bbC^N} \ge 0$ for each $x \in \Omega$. 
Then $(e^{tc}e^{tA})_{t \in [0,\infty)}$ dominates the semigroup generated by
\begin{align*}
	\begin{pmatrix}
		\Delta_{\operatorname{Neu}} &        &                           \\ 
		                            & \ddots &                           \\
		                            &        & \Delta_{\operatorname{Neu}}
	\end{pmatrix}
\end{align*}
Hence, we obtain for the $k$-th component of the vector $v$, that
\begin{align*}
	e^{tc} e^{t\spb(A)} v_k = (e^{tc} e^{tA} v)_k \ge e^{t\Delta_{\operatorname{Neu}}} v_k \succeq \one,
\end{align*}
and thus $v_k \succeq \one$.
So the spectral assumption in Setting~\ref{sett:main} is satisfied for $\lambda_0 = \spb(A)$ and $u := (\one, \ldots, \one)$.
On the other hand, the domination assumption in Setting~\ref{sett:main} is satisfied for this $u$, as well. 
This follows from $\big(L^p(\Omega;\bbC^N)\big)_u = L^\infty(\Omega;\bbC^N)$ along with Theorem~\ref{thm:polynomial-smoothing} and an ultracontractivity argument (see for instance \cite[Theorem in Section~7.3.2]{Arendt2004}).

Since the semigroup generated by $A$ is positive, the resolvent of $A$ is positive on the right of $A$, i.e., the individual (and in fact, even uniform) maximum principle holds at $\lambda_0 = \spb(A)$. 
For the individual anti-maximum principle, we now get the same result as in Theorem~\ref{thm:robin-laplace}:

\begin{theorem}
	The coupled Neumann Laplace operator $A$ satisfies the individual anti-maximum principle at $\lambda_0 = \spb(A)$ if and only if $p > d/2$.
\end{theorem}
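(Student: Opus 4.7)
The plan is to mimic the proof of Theorem~\ref{thm:robin-laplace} essentially verbatim, using the facts already verified in the paragraphs immediately preceding the theorem. Those paragraphs have shown that the spectral and domination assumptions of Setting~\ref{sett:main} both hold at $\lambda_0 = \spb(A)$ with $u = (\one,\ldots,\one)$, and that positivity of the semigroup $(e^{tA})_{t \in [0,\infty)}$ delivers the individual maximum principle at $\lambda_0$. Therefore Theorem~\ref{thm:main}, in the form of the equivalence ``(ii) $\Leftrightarrow$ (iii)'', reduces the individual anti-maximum principle at $\lambda_0$ to the single inclusion
\[
\dom(A) \subseteq \bigl(L^p(\Omega;\bbC^N)\bigr)_u = L^\infty(\Omega;\bbC^N).
\]
Since $\dom(A) = \dom(\Delta_{\operatorname{Neu}})^N$, this is in turn equivalent to the scalar statement $\dom(\Delta_{\operatorname{Neu}}) \subseteq L^\infty(\Omega)$.

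I would then handle the two directions by the same Sobolev-theoretic arguments used for the Robin case. For $p > d/2$, the Sobolev embedding theorem gives $W^{2,p}(\Omega) \hookrightarrow L^\infty(\Omega)$, and since $\dom(\Delta_{\operatorname{Neu}}) \subseteq W^{2,p}(\Omega)$ the required inclusion follows. For $p \leq d/2$, I would invoke the standard construction from \cite[Examples~5.25 and~5.26]{Adams1975} of a function $f \in W^{2,p}(\Omega)$ with compact support strictly inside $\Omega$ that fails to be essentially bounded; its compact support makes the Neumann condition $\partial f/\partial \nu = 0$ on $\partial\Omega$ trivially true, so $f$ lies in $\dom(\Delta_{\operatorname{Neu}}) \setminus L^\infty(\Omega)$, witnessing the failure of the inclusion.

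There is essentially no obstacle here: the matrix-valued potential $V$ has already done all of its work in the preparatory paragraphs, where it was used to verify the quasi-interior lower bound $v_k \succeq \one$ for the components of the leading eigenfunction (via domination by the uncoupled diagonal semigroup) and to secure the irreducibility needed for strict positivity of the dual eigenfunctional. Once Theorem~\ref{thm:main} has been applied, the domination condition decouples completely into $N$ independent copies of the scalar statement for $\Delta_{\operatorname{Neu}}$, so the critical threshold $p > d/2$ inherits directly from the Sobolev embedding into $L^\infty$, exactly as in Theorem~\ref{thm:robin-laplace}.
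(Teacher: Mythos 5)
Your proposal is correct and follows exactly the route the paper intends: the paper's own proof simply states that, given the preparatory facts, the argument is the same as for Theorem~\ref{thm:robin-laplace}, and you have spelled out that argument accurately, including the reduction via Theorem~\ref{thm:main} to the inclusion $\dom(A)\subseteq L^\infty(\Omega;\bbC^N)$, its decoupling into the scalar Neumann statement, and the two Sobolev-theoretic directions.
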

\begin{proof}
	Given the facts discussed before the theorem, the proof is now the same as for Theorem~\ref{thm:robin-laplace}.
\end{proof}

\subsection{Powers of the Robin Laplacian}
	\label{subsection:powers-of-robin-laplacian}

We consider the same situation as in Section~\ref{subsection:laplacian-robin}, but now we assume in addition that $\beta \succeq \one$ and, for the sake of simplicity, that $\beta$ is a $C^\infty$-function.
Since $\beta \succeq \one$, the spectral bound $\spb(\Delta_\beta)$ of the Robin Laplacian $\Delta_\beta$ is strictly negative.

Now, we fix an integer $k \ge 1$ and consider the operator $B := -(-\Delta_\beta)^k$ on $L^p(\Omega)$. 
The semigroup generated by $B$ is not positive unless $k=1$ (however, we note that eventual positivity of the semigroup generated by the square of the Robin Laplacian on the space $C(\overline{\Omega})$ was discussed in \cite[Section~6.4]{DanersGlueckKennedy2016a}).

By the spectral mapping theorem, the spectral bound of $B$ also satisfies $\spb(B) < 0$, and for the resolvent at the point $0$ we have
\begin{align*}
	\Res(0,B) = \Res(0,\Delta_\beta)^k \ge 0.
\end{align*}
Hence, it follows from the Taylor series expansion of the resolvent that $\Res(\lambda,B) \ge 0$ for all $\lambda \in (\spb(B),0]$ (see \cite[Proposition~3.2(i)]{DanersGlueck2018a} and \cite[Proposition~4.2(i)]{DanersGlueckKennedy2016a}).

In particular, the operator $B$ satisfies the individual (even the uniform) maximum principle at $\lambda_0 = \spb(B)$.
As pointed out in Subsection~\ref{subsection:laplacian-robin} the operator $\Delta_\beta$ satisfies the domination assumption in Setting~\ref{sett:main} for $u = \one$ and hence, so does $B$.
Moreover, since $\spb(\Delta_\beta) < 0$, the eigenspace of $B$ for the eigenvalue $\spb(B)$ coincides with the eigenspace of $\Delta_\beta$ for the eigenvalue $\spb(\Delta_\beta)$, and the same is true for the dual operators. 
Thus, $B$ also satisfies the spectral assumption from Setting~\ref{sett:main} (since the same is true for $\Delta_\beta$).

Let us now analyse whether $B$ satisfies the individual anti-maximum principle at $\spb(B)$:

\begin{theorem}
	The operator $B$ satisfies the individual anti-maximum principle at $\lambda_0 = \spb(B)$ if and only if $kp > d/2$.
\end{theorem}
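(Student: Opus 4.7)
The plan is to apply Theorem~\ref{thm:main} to the operator $B$, using all the structural properties that have already been verified in the paragraphs preceding the statement. Since we have already noted that $B$ satisfies the spectral assumption, the domination assumption (with $u = \one$), and the individual maximum principle at $\lambda_0 = \spb(B)$, Theorem~\ref{thm:main} immediately reduces the problem to the purely functional-analytic characterization
\[
    \dom(B) \;\subseteq\; L^p(\Omega)_{\one} \;=\; L^\infty(\Omega).
\]
Everything then hinges on identifying $\dom(B)$ sufficiently explicitly.

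First I would unpack $\dom(B)$. Since $B = -(-\Delta_\beta)^k$ is (up to a sign) an integer power of a generator of an analytic semigroup, elliptic regularity for the Robin problem gives an identification
\[
    \dom(B) \;=\; \dom\big(\Delta_\beta^k\big) \;\subseteq\; W^{2k,p}(\Omega),
\]
subject to the iterated Robin boundary conditions $\tfrac{\partial}{\partial \nu}(\Delta^j u) = \beta \Delta^j u$ on $\partial\Omega$ for $j = 0, 1, \ldots, k-1$; smoothness of $\beta$ and $\partial\Omega$ is used here so that the iterated problem behaves well.

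For the direction $kp > d/2 \Rightarrow$ individual anti-maximum principle: I would invoke the Sobolev embedding theorem to obtain $W^{2k,p}(\Omega) \hookrightarrow L^\infty(\Omega)$, which yields $\dom(B) \subseteq L^\infty(\Omega)$ as required, and then conclude via Theorem~\ref{thm:main}.

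For the converse, assuming $kp \le d/2$, I would adapt the construction at the end of the proof of Theorem~\ref{thm:robin-laplace}. By \cite[Examples~5.25 and~5.26]{Adams1975} there exists a function $u \in W^{2k,p}(\Omega)$ with compact support inside $\Omega$ that fails to lie in $L^\infty(\Omega)$. Because $u$ vanishes in a neighbourhood of $\partial\Omega$, so do all its derivatives; hence the iterated Robin conditions are satisfied trivially, and $u \in \dom(\Delta_\beta^k) = \dom(B)$. This exhibits $\dom(B) \not\subseteq L^\infty(\Omega)$ and so, again by Theorem~\ref{thm:main}, the individual anti-maximum principle fails. The only subtle point in this step is to confirm that compactly supported $W^{2k,p}$-functions truly lie in $\dom(\Delta_\beta^k)$ in the abstract $L^p$-sense (and are not merely distributional solutions), which follows from the iterated elliptic regularity characterization of the domain.
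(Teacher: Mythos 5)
Your proposal is correct and follows essentially the same route as the paper: reduce via Theorem~\ref{thm:main} to the inclusion $\dom(B) \subseteq L^\infty(\Omega)$, use elliptic regularity to place $\dom(B)$ in $W^{2k,p}(\Omega)$, then apply the Sobolev embedding for $kp > d/2$ and a compactly supported unbounded $W^{2k,p}$-function for $kp \le d/2$. The paper's proof is terser; your additional remarks on the iterated Robin boundary conditions merely make explicit why the compactly supported function lies in $\dom(B)$, which the paper takes for granted.
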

\begin{proof}
	Since both $\partial \Omega$ and $\beta$ are smooth, it follows from elliptic regularity that $\dom(B) \subseteq W^{2k,p}(\Omega)$.
	So if $kp > d/2$, then by the Sobolev embedding theorem, $\dom(B) \subseteq L^\infty(\Omega) = (L^p(\Omega))_{\one}$, and consequently Theorem~\ref{thm:main} implies that $B$ satisfies the individual anti-maximum principle at $\spb(B)$.
	
	Conversely, if $kp \le d/2$, then we can find a function $w: \Omega \to \bbR$ which has compact support wihtin $\Omega$ and is in $W^{2k,p}(\Omega)$, but not in $L^\infty(\Omega)$. 
	So, this function $w$ is an element of $\dom(B)$, but not of $L^\infty(\Omega) = (L^p(\Omega))_{\one}$.
	Whence, $B$ does not satisfy the anti-maximum principle at $\spb(B)$ according to Theorem~\ref{thm:main}.
\end{proof}

\subsection{The Dirichlet-to-Neumann operator}

Let $\Omega \subseteq \bbR^d$ be a bounded domain with $C^{\infty}$-boundary. 
Let $\Delta_{\operatorname{Dir}}$ denote the Dirichlet Laplace operator  on $L^2(\Omega)$ and consider a function $V\in L^\infty(\Omega,\bbR)$ such that the infinity norm of the negative part of $V$, $\norm{V^-}_\infty$, is strictly smaller than the smallest eigenvalue of $-\Delta_{\operatorname{Dir}}$. In this situation, the operator $-\Delta_{\operatorname{Dir}} + V$ is positive definite; in particular, $0$ is not in its spectrum. 
This enables us, for every $\phi \in H^{1/2}(\partial \Omega)$, to uniquely solve
\begin{align*}
	(-\Delta_{\operatorname{Dir}} + V) u &= 0 \quad \text{weakly on } \Omega,\\ 
	\trace u&=\phi;
\end{align*} 
where $u\in W^{1,2}(\Omega)$. 
Finally, denote the weak outer normal derivative of $u$ on the boundary $\partial \Omega$ by $\partial_{\nu} u$ whenever it exists (see \cite{terElstOuhabaz2019a} or \cite{terElstOuhabaz2019b} for a definition of weak normal derivative). Then the operator
$
	\phi\mapsto \partial_\nu u
$
is called the \emph{Dirichlet-to-Neumann} operator. We denote this operator by $D_V$ and its domain by $\dom{D_V}$. For an exact definition, and more details about the Dirichlet-to-Neumann operator, we refer the reader to \cite{terElstOuhabaz2019a} or \cite{terElstOuhabaz2019b}. 
This operator is self-adjoint on $L^2(\partial \Omega)$ with compact resolvent and $-D_V$ generates a holomorphic $C_0$-semigroup on $L^2(\partial \Omega)$ of angle $\frac{\pi}2$. 
Moreover, the semigroup generated by $-D_V$ is positive (see \cite[Theorem~2.3]{terElstOuhabaz2014} or \cite[Theorem~1.1(a)]{AbreuCapelato2018}) and even irreducible, as shown in \cite[Theorem~1.1(c)]{AbreuCapelato2018} (note that there is a small inaccuracy in the statement of this result: the estimate that is assumed there should actually be the stronger estimate that is also assumed in \cite[Theorem~1.1(a)]{AbreuCapelato2018}).

Due to the positivity of the semigroup, the resolvent satisfies $\Res(\mu, -D_V) \geq 0$ for all $\mu >\spb(-D_V)$. In particular, we have an individual maximum principle at $\lambda_0=\spb(-D_V)$. We now show that for $d \ge 3$, there is no anti-maximum principle. 

\begin{theorem}\label{thm:anti-max-DtN}
	The individual anti-maximum principle holds at $\lambda_0=\spb(-D_V)$ if and only if $d \leq 2$.
\end{theorem}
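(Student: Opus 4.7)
The plan is to apply Theorem~\ref{thm:main} with $A := -D_V$, $E := L^2(\partial\Omega)$, quasi-interior point $u := \one$ (so that $E_u = L^\infty(\partial\Omega)$) and $\lambda_0 := \spb(-D_V)$. Positivity of the semigroup $(e^{-tD_V})_{t\ge 0}$ already yields $\Res(\mu,-D_V) \ge 0$ for all $\mu > \spb(-D_V)$, i.e., the individual maximum principle at $\lambda_0$ holds for free. Consequently, once the spectral and domination assumptions of Setting~\ref{sett:main} have been verified, Theorem~\ref{thm:main} will reduce the individual anti-maximum principle at $\lambda_0$ to the single condition $\dom(D_V) \subseteq L^\infty(\partial\Omega)$.

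To verify the spectral assumption I would exploit that $-D_V$ has compact resolvent and generates a positive and irreducible semigroup: by \cite[Proposition~C-III-3.5]{Nagel1986} the spectral bound $\spb(-D_V)$ is a geometrically and algebraically simple eigenvalue, with a quasi-interior eigenvector $v \in L^2(\partial\Omega)_+$, and a strictly positive functional in the dual eigenspace. Smoothness of $\partial\Omega$ and $V$ together with elliptic regularity for the associated pseudodifferential eigenproblem on $\partial\Omega$ upgrade $v$ to a continuous function; by compactness of $\partial\Omega$ and strict positivity, $v \succeq \one$. For the domination assumption I would invoke the known ultracontractivity of the DtN semigroup on a smooth domain, i.e.\ an estimate of the form $\|e^{-tD_V}\|_{L^2(\partial\Omega) \to L^\infty(\partial\Omega)} \le c\, t^{-q}$ for $t \in (0,t_0]$ and some $q > 0$; combined with analyticity of the semigroup (angle $\pi/2$ by hypothesis), Theorem~\ref{thm:polynomial-smoothing} yields an integer $n \ge 0$ with $\dom(D_V^n) \subseteq L^\infty(\partial\Omega)$.

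The key structural input for the dimension dichotomy is the identification $\dom(D_V) = H^1(\partial\Omega)$, which holds under the smoothness assumptions made on $\partial\Omega$ and $V$ via standard elliptic regularity for the auxiliary Dirichlet problem (data in $H^1(\partial\Omega)$ yields a solution in $H^{3/2}(\Omega)$ whose weak normal derivative lies in $L^2(\partial\Omega)$, and conversely). Since $\partial\Omega$ is a smooth compact $(d-1)$-dimensional manifold, the Sobolev embedding $H^1(\partial\Omega) \hookrightarrow L^\infty(\partial\Omega)$ holds precisely when $d-1 \le 1$, i.e.\ $d \le 2$. In that case $\dom(D_V) \subseteq L^\infty(\partial\Omega)$ and Theorem~\ref{thm:main} delivers the individual anti-maximum principle. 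Conversely, for $d \ge 3$ one can exhibit $\phi \in H^1(\partial\Omega) \setminus L^\infty(\partial\Omega)$, for instance by transporting a localized $\log|\log|\argument||$-type singularity into a coordinate chart via a smooth cutoff; such a $\phi$ lies in $\dom(D_V)$ but not in $L^\infty(\partial\Omega)$, so Theorem~\ref{thm:main} rules out the anti-maximum principle.

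The hard part will be the clean identification $\dom(D_V) = H^1(\partial\Omega)$, and pinning down a suitable reference that covers a sign-indefinite (but small) potential $V$; for the ``if'' direction only the inclusion $\dom(D_V) \subseteq H^1(\partial\Omega)$ is needed, while the ``only if'' direction requires the reverse inclusion (so as to place the explicit unbounded test function into $\dom(D_V)$). A secondary technicality is verifying the precise ultracontractive estimate required in the domination step, but this is a standard smoothing property of the DtN semigroup on smooth domains and can be extracted from the literature cited in the surrounding discussion.
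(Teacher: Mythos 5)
Your overall strategy is exactly the paper's: reduce everything to Theorem~\ref{thm:main} with $A=-D_V$, $u=\one$, get the maximum principle for free from positivity of the semigroup, verify the domination assumption via the ultracontractive estimate of ter Elst--Ouhabaz together with Theorem~\ref{thm:polynomial-smoothing} (note that only the implication (i)~$\Rightarrow$~(ii) is used there, so analyticity is not actually needed for this step), identify $\dom(D_V)=H^1(\partial\Omega)$ (the paper cites \cite[Theorem~5.2]{BehrndtterElst2015} for this), and then let the Sobolev embedding $H^1$ of a $(d-1)$-manifold into $L^\infty$ decide the dichotomy $d\le 2$ versus $d\ge 3$. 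All of that matches.

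There is, however, one step that does not go through as you state it: the verification that the principal eigenvector satisfies $v\succeq\one$. From \cite[Proposition~C-III-3.5]{Nagel1986} you only get that $v$ is a quasi-interior point of $L^2(\partial\Omega)_+$, i.e.\ $v>0$ \emph{almost everywhere}; upgrading $v$ to a continuous function by elliptic regularity and then invoking compactness does not yield a uniform positive lower bound, because a continuous function that is positive a.e.\ may still vanish at some point of $\partial\Omega$, in which case its minimum is $0$ and $v\not\succeq\one$. You need an additional ingredient to exclude zeros of $v$ (a strong maximum principle/Harnack-type argument on the boundary, or what the paper actually does: show that $e^{-tD_V}$ maps $L^2(\partial\Omega)$ into $C(\partial\Omega)$, use that the restricted semigroup on $C(\partial\Omega)$ is strongly continuous so that $e^{-tD_V}\one\succeq\one$ for some $t>0$, and then invoke the fact that an operator mapping one positive element to a quasi-interior point of $C(\partial\Omega)$ maps \emph{every} quasi-interior point of $L^2(\partial\Omega)$ to a quasi-interior point of $C(\partial\Omega)$; applied to $v=e^{t\spb(-D_V)}\cdot e^{-tD_V}v$ up to a positive scalar, this gives $v\succeq\one$). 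With that repair, and with a citable source for $\dom(D_V)=H^1(\partial\Omega)$, your argument coincides with the paper's proof.
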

\begin{proof}
	According to \cite[Theorem~2.3]{terElstOuhabaz2019a}, there exist $c,q>0$ such that
	\begin{align*}
		\qquad 
		e^{-t D_V}L^2(\partial \Omega)\subseteq L^\infty(\partial \Omega)
		\qquad \text{and} \qquad 
		\norm{e^{-tD_V}}_{L^2(\partial \Omega)\to L^\infty(\partial \Omega)} \le c \, t^{-q}.
	\end{align*}
	for all $t\in (0,1]$. Therefore, by virtue of Theorem~\ref{thm:polynomial-smoothing}, there exists an integer $n\geq 0$ such that $\dom(D_V^n)\subseteq L^\infty(\partial \Omega)$. Combining this with $L^\infty(\partial \Omega) = L^2(\partial \Omega)_{\one}$ -- where $\one$ denotes the constant function with value one -- we obtain that the domination assumption in Setting~\ref{sett:main} is fulfilled for $u = \one$. 
	
	Moreover, since the semigroup generated by $-D_V$ is both positive and irreducible, the spectral bound $\spb(-D_V)$ is an eigenvector of $-D_V$ with corresponding eigenspace spanned by a vector $v$ which is strictly positive almost everywhere; see \cite[Proposition~C-III-3.5]{Nagel1986}. Let us show that even $v \succeq \one$, so that also the spectral assumption in Setting~\ref{sett:main} is satisfied for $u = \one$ and $\lambda_0 = \spb(-D_V)$.
	To this end, we note that it follows from $e^{-t D_V}L^2(\partial \Omega)\subseteq L^\infty(\partial \Omega)$ and from \cite[Lemma~2.2]{terElstOuhabaz2019b} that $e^{-t D_V}$ maps $L^2(\partial \Omega)$ into $C(\partial \Omega)$ for each $t > 0$; moreover, this mapping is continuous by the closed graph theorem.
	Now, we use an argument inspired by \cite[Section~3]{ArendtterElstGlueck2020}: 
	It was shown in \cite[Theorem~1.1]{terElstOuhabaz2019b} that $\left(e^{-t D_V} \restrict{C(\partial \Omega)}\right)_{t\geq 0}$ is a $C_0$-semigroup on $C(\partial \Omega)$. Hence, there exists a time $t > 0$ such that $e^{-t D_V} \one \succeq \one$, i.e., such that $e^{-t D_V}$ maps a positive element of $L^2(\partial \Omega)$ to a quasi-interior point of $C(\partial \Omega)$. 
	Such an operator automatically maps all quasi-interior points of $L^2(\partial \Omega)$ to quasi-interior points of $C(\partial \Omega)$ (see \cite[Proposition~2.21]{GlueckWeber2020} and note that the notion \emph{almost interior point} that occurs there is, within Banach lattices, equivalent to \emph{quasi-interior point} \cite[pp.\,242--243]{GlueckWeber2020}).
	As a result, $v = e^{-tD_V}v$ is a quasi-interior point of $C(\partial \Omega)$ and thus satisfies $v \succeq \one$.
	
	As mentioned above, the positivity of the semigroup implies that the (uniform and hence, in particular, individual) maximum principle holds at $\lambda_0 = \spb(-D_V)$.  Lastly, we know that
	$
		\dom(-D_V)= H^{1}(\partial\Omega);
	$
	this follows from \cite[Theorem~5.2]{BehrndtterElst2015} since $0 \in \rho(-\Delta_{\operatorname{Dir}}+V)$.
	Using Sobolev embedding theorems on manifolds (see, for instance, \cite[Theorem~6.2]{HebeyRobert2008}), we obtain
	\[
		\dom(-D_V)\subseteq C(\partial\Omega) \subseteq L^\infty(\partial\Omega)= L^2(\partial \Omega)_{\one}
	\]
	 if  $d<3$. On the other hand, if $d\geq 3$, 
	 then $\partial \Omega$ is a manifold of dimension at least two and hence, $H^{1}(\partial \Omega)\not\subseteq L^\infty(\partial\Omega)$ (see \cite[Examples~5.25 and~5.26]{Adams1975}, \cite[Proposition~4.1]{HebeyRobert2008}, or \cite[Example~2.5.1]{Kesavan1989}).
	  The result thus follows from Theorem~\ref{thm:main}.
\end{proof}

\subsection*{Acknowledgements} 

The authors would like to express their gratitude to Mikhail Ostrovskii \cite{Ostrovskii2019} and Jochen Wengenroth \cite{Wengenroth2021} for two answers on MathOverflow that contain useful references about operator ranges.
The question of whether assertion~(b) in Corollary~\ref{cor:eventually-in-subspace-countable} holds was kindly brought to the authors' attention by Bálint Farkas. 

Part of the work on this article was done when the first author visited the second author at Universität Passau, Germany.
	
The first named author was supported by 
Deutscher Aka\-de\-mi\-scher Aus\-tausch\-dienst (Forschungs\-stipendium-Promotion in Deutschland).

\bibliographystyle{plainurl}
\bibliography{literature}

\begin{thebibliography}{10}

\bibitem{AbreuCapelato2018}
Jamil {Abreu} and \'Erika {Capelato}.
\newblock {Dirichlet-to-Neumann semigroup with respect to a general second
  order eigenvalue problem}.
\newblock {\em {Semigroup Forum}}, 97(2):183--202, 2018.
\newblock \href {https://doi.org/10.1007/s00233-018-9913-x}
  {\path{doi:10.1007/s00233-018-9913-x}}.

\bibitem{Adams1975}
Robert~A. {Adams}.
\newblock {\em {Sobolev spaces}}.
\newblock New York-San Francisco-London: Academic Press, Inc., 1975.

\bibitem{Arendt1987}
Wolfgang {Arendt}.
\newblock {Resolvent positive operators}.
\newblock {\em {Proc. Lond. Math. Soc. (3)}}, 54:321--349, 1987.
\newblock \href {https://doi.org/10.1112/plms/s3-54.2.321}
  {\path{doi:10.1112/plms/s3-54.2.321}}.

\bibitem{Arendt2004}
Wolfgang {Arendt}.
\newblock {Semigroups and evolution equations: functional calculus, regularity
  and kernel estimates}.
\newblock In {\em Handbook of differential equations: Evolutionary equations.
  Vol. I}, pages 1--85. Amsterdam: Elsevier/North-Holland, 2004.
\newblock \href {https://doi.org/10.1016/S1874-5717(04)80003-3}
  {\path{doi:10.1016/S1874-5717(04)80003-3}}.

\bibitem{ArendtterElstGlueck2020}
Wolfgang {Arendt}, A.~F.~M. {ter Elst}, and Jochen {Gl\"uck}.
\newblock {Strict positivity for the principal eigenfunction of elliptic
  operators with various boundary conditions}.
\newblock {\em {Adv. Nonlinear Stud.}}, 20(3):633--650, 2020.
\newblock \href {https://doi.org/10.1515/ans-2020-2091}
  {\path{doi:10.1515/ans-2020-2091}}.

\bibitem{Arora2022}
Sahiba {Arora}.
\newblock {Locally eventually positive operator semigroups}.
\newblock {\em {J. Oper. Theory}}, 88(1):203--242, 2022.
\newblock \href {https://doi.org/10.7900/jot.2021jan26.2316}
  {\path{doi:10.7900/jot.2021jan26.2316}}.

\bibitem{Arora2023}
Sahiba Arora.
\newblock {\em {Long-term behavior of operator semigroups and (anti-)maximum
  principles}}.
\newblock PhD thesis, Technische Universit{\"a}t Dresden, 2023.
\newblock \url{https://nbn-resolving.org/urn:nbn:de:bsz:14-qucosa2-834573}.

\bibitem{AroraGlueck2021b}
Sahiba {Arora} and Jochen {Gl\"uck}.
\newblock {An operator theoretic approach to uniform (anti-)maximum
  principles}.
\newblock {\em {J. Differ. Equations}}, 310:164--197, 2022.
\newblock \href {https://doi.org/10.1016/j.jde.2021.11.037}
  {\path{doi:10.1016/j.jde.2021.11.037}}.

\bibitem{AroraGlueck2023}
Sahiba Arora and Jochen Glück.
\newblock Irreducibility of eventually positive semigroups, 2023.
\newblock Preprint.
\newblock \href {https://arxiv.org/abs/2307.04627} {\path{arXiv:2307.04627}}.

\bibitem{BatkaiKramarRhandi2017}
Andr\'as {B\'atkai}, Marjeta~K. {Fijav\v{z}}, and Abdelaziz {Rhandi}.
\newblock {\em {Positive operator semigroups: From finite to infinite
  dimensions}}, volume 257.
\newblock Basel: Birkh\"auser/Springer, 2017.
\newblock \href {https://doi.org/10.1007/978-3-319-42813-0}
  {\path{doi:10.1007/978-3-319-42813-0}}.

\bibitem{BehrndtterElst2015}
Jussi {Behrndt} and Antonius~F.M. {ter Elst}.
\newblock {Dirichlet-to-Neumann maps on bounded Lipschitz domains}.
\newblock {\em {J. Differ. Equations}}, 259(11):5903--5926, 2015.
\newblock \href {https://doi.org/10.1016/j.jde.2015.07.012}
  {\path{doi:10.1016/j.jde.2015.07.012}}.

\bibitem{Birindelli1995}
Isabeau {Birindelli}.
\newblock {Hopf's lemma and anti-maximum principle in general domains}.
\newblock {\em {J. Differ. Equations}}, 119(2):450--472, 1995.
\newblock \href {https://doi.org/10.1006/jdeq.1995.1098}
  {\path{doi:10.1006/jdeq.1995.1098}}.

\bibitem{ClementPeletier1979}
Philippe {Cl{\'e}ment} and Lambertus~A. {Peletier}.
\newblock {An anti-maximum principle for second order elliptic operators}.
\newblock {\em {J. Differ. Equations}}, 34:218--229, 1979.
\newblock \href {https://doi.org/10.1016/0022-0396(79)90006-8}
  {\path{doi:10.1016/0022-0396(79)90006-8}}.

\bibitem{Cross1980}
R.~W. {Cross}.
\newblock {On the continuous linear image of a Banach space}.
\newblock {\em {J. Aust. Math. Soc.}}, 29:219--234, 1980.
\newblock \href {https://doi.org/10.1017/S1446788700021200}
  {\path{doi:10.1017/S1446788700021200}}.

\bibitem{CrossOstrovskiiShevchik1995}
Ronald~W. {Cross}, Mikhail~I. {Ostrovskii}, and Vitalii~V. {Shevchik}.
\newblock {Operator ranges in Banach spaces, I}.
\newblock {\em {Math. Nachr.}}, 173:91--114, 1995.
\newblock \href {https://doi.org/10.1002/mana.19951730107}
  {\path{doi:10.1002/mana.19951730107}}.

\bibitem{DanersGlueck2017}
Daniel {Daners} and Jochen {Gl\"uck}.
\newblock {The role of domination and smoothing conditions in the theory of
  eventually positive semigroups}.
\newblock {\em {Bull. Aust. Math. Soc.}}, 96(2):286--298, 2017.
\newblock \href {https://doi.org/10.1017/S0004972717000260}
  {\path{doi:10.1017/S0004972717000260}}.

\bibitem{DanersGlueck2018b}
Daniel {Daners} and Jochen {Gl\"uck}.
\newblock {A criterion for the uniform eventual positivity of operator
  semigroups}.
\newblock {\em {Integral Equations Oper. Theory}}, 90(4):19, 2018.
\newblock Id/No 46.
\newblock \href {https://doi.org/10.1007/s00020-018-2478-y}
  {\path{doi:10.1007/s00020-018-2478-y}}.

\bibitem{DanersGlueck2018a}
Daniel {Daners} and Jochen {Gl\"uck}.
\newblock {Towards a perturbation theory for eventually positive semigroups}.
\newblock {\em {J. Oper. Theory}}, 79(2):345--372, 2018.
\newblock \href {https://doi.org/10.7900/jot.2017mar29.2148}
  {\path{doi:10.7900/jot.2017mar29.2148}}.

\bibitem{DanersGlueckKennedy2016b}
Daniel {Daners}, Jochen {Gl\"uck}, and James~B. {Kennedy}.
\newblock {Eventually and asymptotically positive semigroups on Banach
  lattices}.
\newblock {\em {J. Differ. Equations}}, 261(5):2607--2649, 2016.
\newblock \href {https://doi.org/10.1016/j.jde.2016.05.007}
  {\path{doi:10.1016/j.jde.2016.05.007}}.

\bibitem{DanersGlueckKennedy2016a}
Daniel {Daners}, Jochen {Gl\"uck}, and James~B. {Kennedy}.
\newblock {Eventually positive semigroups of linear operators}.
\newblock {\em {J. Math. Anal. Appl.}}, 433(2):1561--1593, 2016.
\newblock \href {https://doi.org/10.1016/j.jmaa.2015.08.050}
  {\path{doi:10.1016/j.jmaa.2015.08.050}}.

\bibitem{Davies1989}
E.~Brian {Davies}.
\newblock {\em Heat kernels and spectral theory}, volume~92 of {\em Cambridge
  Tracts in Mathematics}.
\newblock Cambridge University Press, Cambridge, 1989.
\newblock \href {https://doi.org/10.1017/CBO9780511566158}
  {\path{doi:10.1017/CBO9780511566158}}.

\bibitem{EngelNagel2000}
Klaus-Jochen {Engel} and Rainer {Nagel}.
\newblock {\em {One-parameter semigroups for linear evolution equations}},
  volume 194.
\newblock Berlin: Springer, 2000.

\bibitem{GlueckWeber2020}
Jochen {Gl\"uck} and Martin~R. {Weber}.
\newblock {Almost interior points in ordered Banach spaces and the long-term
  behaviour of strongly positive operator semigroups}.
\newblock {\em {Stud. Math.}}, 254(3):237--263, 2020.
\newblock \href {https://doi.org/10.4064/sm190111-18-10}
  {\path{doi:10.4064/sm190111-18-10}}.

\bibitem{GrunauSweers2001}
Hans-Christoph {Grunau} and Guido {Sweers}.
\newblock {Optimal conditions for anti-maximum principles}.
\newblock {\em {Ann. Sc. Norm. Super. Pisa, Cl. Sci., IV. Ser.}},
  30(3-4):499--513, 2001.
\newblock URL: \url{http://www.numdam.org/item/ASNSP_2001_4_30_3-4_499_0/}.

\bibitem{HebeyRobert2008}
Emmanuel {Hebey} and Fr\'ed\'eric {Robert}.
\newblock {Sobolev spaces on manifolds}.
\newblock In {\em Handbook of global analysis}, pages 375--415. Amsterdam:
  Elsevier, 2008.
\newblock \href {https://doi.org/10.1016/B978-044452833-9.50008-5}
  {\path{doi:10.1016/B978-044452833-9.50008-5}}.

\bibitem{Kesavan1989}
Srinivasan {Kesavan}.
\newblock {\em {Topics in functional analysis and applications}}.
\newblock New York: John Wiley \& Sons, Inc.; New Delhi: Wiley Eastern Limited,
  1989.

\bibitem{Meyer-Nieberg1991}
Peter {Meyer-Nieberg}.
\newblock {\em {Banach lattices}}.
\newblock Berlin etc.: Springer-Verlag, 1991.

\bibitem{Nagel1986}
Rainer {Nagel}, editor.
\newblock {\em {One-parameter semigroups of positive operators}}, volume 1184.
\newblock Springer, Cham, 1986.

\bibitem{Ostrovskii2019}
Mikhail {Ostrovskii}, 2019.
\newblock MathOverflow post.
\newblock URL: \url{https://mathoverflow.net/a/332155/102946}.

\bibitem{Peruzzetto2022}
Marco Peruzzetto.
\newblock On eventual regularity properties of operator-valued functions.
\newblock {\em Stud. Math.}, 265(2):141--176, 2022.
\newblock \href {https://doi.org/10.4064/sm201202-21-10}
  {\path{doi:10.4064/sm201202-21-10}}.

\bibitem{Pinchover1999}
Yehuda {Pinchover}.
\newblock {Maximum and anti-maximum principles and eigenfunctions estimates via
  perturbation theory of positive solutions of elliptic equations}.
\newblock {\em {Math. Ann.}}, 314(3):555--590, 1999.
\newblock \href {https://doi.org/10.1007/s002080050307}
  {\path{doi:10.1007/s002080050307}}.

\bibitem{ProtterWeinberger1999}
Murray~H. {Protter} and Hans~F. {Weinberger}.
\newblock {\em Maximum Principles in Differential Equations}.
\newblock Partial differential equations. Springer New York, 1999.
\newblock \href {https://doi.org/10.1007/978-1-4612-5282-5}
  {\path{doi:10.1007/978-1-4612-5282-5}}.

\bibitem{Rudin1991}
Walter {Rudin}.
\newblock {\em {Functional analysis}}.
\newblock International series in pure and applied mathematics. New York, NY:
  McGraw-Hill, 1991.

\bibitem{Schaefer1974}
Helmut~H. {Schaefer}.
\newblock {\em {Banach lattices and positive operators}}, volume 215.
\newblock Springer, Berlin, 1974.

\bibitem{Sweers1997}
Guido {Sweers}.
\newblock {\(L^ n\) is sharp for the anti-maximum principle}.
\newblock {\em {J. Differ. Equations}}, 134(1):148--153, 1997.
\newblock \href {https://doi.org/10.1006/jdeq.1996.3211}
  {\path{doi:10.1006/jdeq.1996.3211}}.

\bibitem{Takac1996}
Peter {Tak\'a\v{c}}.
\newblock {An abstract form of maximum and anti-maximum principles of Hopf's
  type}.
\newblock {\em {J. Math. Anal. Appl.}}, 201(2):339--364, 1996.
\newblock \href {https://doi.org/10.1006/jmaa.1996.0259}
  {\path{doi:10.1006/jmaa.1996.0259}}.

\bibitem{Tanabe1997}
Hiroki {Tanabe}.
\newblock {\em {Functional analytic methods for partial differential
  equations}}, volume 204.
\newblock New York, NY: Marcel Dekker, 1997.
\newblock \href {https://doi.org/10.1201/9780203755419}
  {\path{doi:10.1201/9780203755419}}.

\bibitem{terElstOuhabaz2014}
Antonius~F.M. {ter Elst} and El~M. {Ouhabaz}.
\newblock {Analysis of the heat kernel of the Dirichlet-to-Neumann operator}.
\newblock {\em {J. Funct. Anal.}}, 267(11):4066--4109, 2014.
\newblock \href {https://doi.org/10.1016/j.jfa.2014.09.001}
  {\path{doi:10.1016/j.jfa.2014.09.001}}.

\bibitem{terElstOuhabaz2019b}
Antonius~F.M. {ter Elst} and El~M. {Ouhabaz}.
\newblock {Analyticity of the Dirichlet-to-Neumann semigroup on continuous
  functions}.
\newblock {\em {J. Evol. Equ.}}, 19(1):21--31, 2019.
\newblock \href {https://doi.org/10.1007/s00028-018-0467-x}
  {\path{doi:10.1007/s00028-018-0467-x}}.

\bibitem{terElstOuhabaz2019a}
Antonius~F.M. {ter Elst} and El~M. {Ouhabaz}.
\newblock {Dirichlet-to-Neumann and elliptic operators on
  \(C^{1+{\kappa}}\)-domains: Poisson and Gaussian bounds}.
\newblock {\em {J. Differ. Equations}}, 267(7):4224--4273, 2019.
\newblock \href {https://doi.org/10.1016/j.jde.2019.04.034}
  {\path{doi:10.1016/j.jde.2019.04.034}}.

\bibitem{Wengenroth2021}
Jochen {Wengenroth}, 2021.
\newblock MathOverflow post.
\newblock URL: \url{https://mathoverflow.net/a/393345/102946}.

\end{thebibliography}

\end{document}